\numberwithin{equation}{section}
\newtheorem{proposition}{Proposition}[section]
\newtheorem{lemma}[proposition]{Lemma}
\newtheorem{theorem}[proposition]{Theorem}
\newtheorem*{theorem*}{Theorem}
\newtheorem{corollary}[proposition]{Corollary}
\theoremstyle{definition}
\title{A note on isomorphisms between submonoids of $\mathbb N^k$ and numerical semigroups}
\author{Jerson Borja}
\date{\today}
\newcommand{\Addresses}{{% additional braces for segregating \footnotesize
  \bigskip
  \footnotesize

J. Borja: \textsc{}\par\nopagebreak
  \textit{E-mail address}: \texttt{jersonborjas@correo.unicordoba.edu.co\\
  Departamento de Matemáticas y Estadística\\
Universidad de Córdoba-Colombia}
}}
\begin{document}
\maketitle

\begin{abstract}
    Given a submonoid $H$ of $\mathbb N^k$, we give some characterizations of the minimum $r\in \mathbb N^+$ such that $H$ is isomorphic to a submonoid of $\mathbb N^r$. In the context of submonoids of $\mathbb N$, we prove that if two numerical semigroups are isomorphic submonoids of $\mathbb N$, then they are equal and the identity map is the unique isomorphism between them. 
\end{abstract}

\section{Introduction}

The symbol $\mathbb N$ stands for the set $\{0,1,2,\ldots,\}$. Elements in $\mathbb N^k$ will be regarded as functions $h:\{1,2,\ldots, k\}\to \mathbb N$. Sometimes it is more convenient to see $h\in \mathbb N^k$ as the $k$-tuple $(h(1), h(2), \ldots, h(k))$, and we also write $h=(h(1), h(2), \ldots, h(k))$ for simplicity.

The set $\mathbb N^k$ with the usual sum is a monoid with identity element $(0,0,\ldots,0)$ that we will denote by $0_k$.

For simplicity we will call a submonoid of $\mathbb N^k$ a \textit{$k$-monoid}.

One interesting property of every $k$-monoid $H$ is that $H$ has a minimal generating set that we denote $\beta(H)$. This set $\beta(H)$ has the property that if $X\subseteq H$ generates $H$, then $\beta(H)\subseteq X$. Moreover, $\beta(H)=H^*\setminus (H^*+H^*)$, where $H^*=H\setminus \{0_k\}$.

When $H$ is finitely generated, we can define the \textit{dimension} of $H$, $dim\ H$, as the cardinality of the set $\beta(H)$, $dim\ H=|\beta(H)|$.

If $H$ is a $k$-monoid and $F$ is an $l$-monoid, then a function $\varphi:H\to F$ is \textit{additive} if it satisfies $\varphi(h_1+h_2)=\varphi(h_1)+\varphi(h_2)$ for all $h_1, h_2\in H$. If $\varphi:H\to F$ is additive, then $\varphi(0_k)=0_l$. An ismomorphism between $H$ and  $F$ is a bijective additive function $\varphi:H\to F$.

A result of J. C. Rosales \cite{rosalesarticle} establishes that a finitely generated commutative monoid is isomorphic to $k$-monoid if and only if it is cancellative, torsion free and has no other units than zero. There exist $k$-monoids that are not finitely generated. For instance, for $k>1$, the set $\{h\in \mathbb N^k: 1\leq h(1)\leq h(2)\leq \cdots \leq h(k)\}\cup \{0_k\}$ is a $k$-monoid that is not finitely generated. If $k=1$, every 1-monoid is finitely generated, and furthermore every nontrivial 1-monoid is isomorphic to a \textit{numerical semigroup} (see \cite{rosalesbook}).

To state our main results we make some comments and introduce terminology.

Let $H$ be a $k$-monoid. Then for every $r>k$ there exists an $r$-monoid that is isomorphic to $H$. In fact, for any $r\in \mathbb N^+$ consider the map $\iota_r:\mathbb N^r\to \mathbb N^{r+1}$ given by $\iota_r(h)=(0,h(1),\ldots, h(r))$ for all $h\in \mathbb N^r$. This map $\iota_r$ is a monomorphism. So, if $r>k$, then $H\cong \iota_{r-1}(\cdots\iota_k(H)\cdots)$. Thus, the question of interest is to determine when there exists an $r$-monoid isomorphic to $H$ if $r<k$.

We associate to $H$ the set $\mathcal I(H):=\{r\in \mathbb N^+: H\cong K\text{ for some $r$-monoid }K\}$. We define the \textit{index} of $H$, $ind\ H$, as the minimum element of $\mathcal I(H)$:
\begin{equation*}
    ind\ H=\min \{r\in \mathbb N^+: H\cong K\text{ for some $r$-monoid }K\}.
\end{equation*}

Given a nonempty subset $A$ of $\mathbb N^k$, let $\mathbb Q^A$ be the $\mathbb Q$-vector space of all functions $\alpha:A\to \mathbb Q$ with the usual sum and scalar product. For each $i\in I_k$ we define $\alpha_i^A:A\to \mathbb Q$ by $\alpha_i^A(f)=f(i)$ for all $f\in A$. We will denote by $vect\ A$ the subspace of $\mathbb Q^A$ spanned by the set $\{\alpha_i^A:i\in I_k\}$.

For any $k\in \mathbb N^+$, let $I_k=\{1,2,\ldots, k\}$. Let $A$ be a nonempty subset of $\mathbb N^k$. We say a nonempty subset $X$ of $I_k$ is \textit{independent over} $A$ if $\{\alpha_i^A:i\in X\}$ is a linearly independent subset of $\mathbb Q^A$. If $\{\alpha_i^A:i\in X\}$ is linearly dependent in $\mathbb Q^A$, we say that $X$ is \textit{dependent over} $A$.

We easily see that if $X\subseteq I_k$ is independent over $A$ and $\varnothing \neq Y\subseteq X$, then $Y$ is independent over $A$; also, if $X\neq \varnothing$ is dependent over $A$ and $X\subseteq Y\subseteq I_k$, then $Y$ is dependent over $A$. Besides, if $\varnothing \neq B\subseteq A\subseteq \mathbb N^k$ and $X$ is dependent over $A$, then $X$ is dependent over $B$, or equivalently, if $X$ is independent over $B$, then $X$ is independent over $A$.

We say that $X\subseteq I_k$ is \textit{maximal independent over} $A$ if $X$ is independent over $A$ and there does not exist $Y\subseteq I_k$ independent over $A$ such that $X\subsetneq Y$. In other words, $X$ is maximal independent over $A$ if and only if $\{\alpha_i^A:i\in X\}$ is a basis of $vect\ A$. Note that if $A\neq \{0_k\}$ is nonempty, then there exists some nonempty set $X\subseteq I_k$ that is maximal independent over $A$, since there is some basis of $vect\ A$ contained in $\{\alpha_i^A:i\in I_k\}$.

For a $k$-monoid $H$ we define the \textit{index of free submonoids of} $H$, denoted $free\ H$, as the maximal dimension of a free submonoid of $H$:   
\begin{equation*}
free\ H=max\{dim\ F: F\subseteq H\text{ and $F$ is free}\}.    
\end{equation*}

One of our main results is that for any nontrivial $k$-monoid $H$, the equality
\begin{equation}
    ind\ H=|X|=free\ H,
\end{equation}
holds, where $X\subseteq I_k$ is maximal independent over $H$.

When $H$ is a finitely generated $k$-monoid and $\beta(H)=\{h_1, h_2,\ldots, h_r\}$ where $r=dim\ H$, we associate to $H$ a $k\times r$ matrix $M^H$ that we will call the \textit{matrix of $H$} (that depends on the order of the elements in $\beta(H)$) given by  
\begin{equation}
    M^H=\begin{pmatrix}
    h_1(1)& h_2(1) &\cdots &h_r(1)\\
    h_1(2)& h_2(2) &\cdots &h_r(2)\\
    \vdots& \vdots &\ddots &\vdots\\
    h_1(k)& h_2(k) &\cdots &h_r(k)
    \end{pmatrix}
\end{equation}

In case $H$ is a finitely generated $k$-monoid, we also prove that 
\begin{equation*}
ind\ H=rank\ M^H,    
\end{equation*}
where $rank\ M^H$ is the rank of the matrix $M^H$.

We will give some characterizations of $k$-monoids of index 1. Here, we come to the ambit of numerical semigroups. A numerical semigroup is a 1-monoid $H$ such that $\mathbb N\setminus H$ is finite. This condition is equivalent to say that $H$ is nontrivial and $\gcd H=1$. Our main result in this context is that if two numerical semigroups are isomorphic, then they are actually equal. This means that the relation of isomorphism on numerical semigroups is trivial.

\section{Index of $k$-monoids}

The following basic properties of $ind\ H$ are easy to prove.

\begin{lemma}
\label{lmindex}
The following statements are true.
\begin{enumerate}
    \item If $H$ is $k$-monoid, then $ind\ H\leq k$.
    \item $ind\ \mathbb N^k=k$.
    \item\label{itemindH} $\mathcal I(H)=\{r\in\mathbb Z^+: r\geq ind\ H\}$.
    \item If $H$ and $F$ are $k$-monoids and $F\subseteq H$, then $ind\ F\leq ind\ H$. 
    \item For any $f\in \mathbb N^k$, $ind\ \langle f\rangle =1$.
    \item If $H\cong F$, then $ind\ H=ind\ F$.
\end{enumerate}
\end{lemma}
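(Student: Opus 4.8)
The plan is to prove the six items in an order that lets the easy ones support the hard one, relying throughout on two facts recorded in the introduction: that a $k$-monoid $H$ is tautologically isomorphic to itself as a submonoid of $\mathbb N^k$, so that $k\in\mathcal I(H)$; and that each $\iota_r:\mathbb N^r\to\mathbb N^{r+1}$ is a monomorphism, so an $r$-monoid can always be re-embedded as an $(r+1)$-monoid. Item (1) is then immediate, since $k\in\mathcal I(H)$ forces $ind\ H=\min\mathcal I(H)\le k$. Item (6) is equally quick: isomorphism is an equivalence relation, so $H\cong F$ yields $\mathcal I(H)=\mathcal I(F)$ and hence equal minima.

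For item (4) I would fix an isomorphism $\varphi:H\to K$ onto an $m$-monoid $K$ with $m=ind\ H$, and observe that the restriction $\varphi|_F$ is an isomorphism from $F$ onto $\varphi(F)$; since $\varphi$ is additive, $\varphi(F)$ is a submonoid of $\mathbb N^m$, so $m\in\mathcal I(F)$ and therefore $ind\ F\le ind\ H$. Items (3) and (5) use the embeddings $\iota_r$. For (3), writing $m=ind\ H$, one inclusion is just the definition of minimum; for the reverse, given $r\ge m$ I would take an $m$-monoid $K\cong H$ and apply the composite $\iota_{r-1}\circ\cdots\circ\iota_m$ to realize $K$, and hence $H$, as an $r$-monoid, so every integer $r\ge m$ lies in $\mathcal I(H)$. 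For (5), if $f=0_k$ the monoid $\langle f\rangle$ is trivial and embeds in $\mathbb N$; if $f\ne 0_k$ then $n\mapsto nf$ is a bijection $\mathbb N\to\langle f\rangle$ whose inverse is additive, so $\langle f\rangle\cong\mathbb N$ and in either case $ind\ \langle f\rangle=1$.

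The real content is item (2), where I must show $ind\ \mathbb N^k\ge k$, that is, that $\mathbb N^k$ is isomorphic to no $r$-monoid with $r<k$. Suppose $\varphi:\mathbb N^k\to K\subseteq\mathbb N^r$ were such an isomorphism, and set $g_i=\varphi(e_i)$ for the standard generators $e_1,\dots,e_k$ of $\mathbb N^k$. The plan is to show that $g_1,\dots,g_k$, viewed in $\mathbb Q^r$, are $\mathbb Q$-linearly independent, which is impossible when $k>r$. I expect this to be the main obstacle, and I would argue it as follows: from a hypothetical nontrivial rational relation among the $g_i$, clear denominators and split it into positive and negative parts $\sum_{i\in P}a_ig_i=\sum_{j\in N}b_jg_j$ with $P\cap N=\varnothing$ and nonnegative integer coefficients, not all zero. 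Since $\varphi$ is additive, each side is the $\varphi$-image of the corresponding $\mathbb N$-combination of the $e_i$, so injectivity of $\varphi$ gives $\sum_{i\in P}a_ie_i=\sum_{j\in N}b_je_j$ in $\mathbb N^k$; comparing coordinates across the disjoint index sets forces every $a_i$ and $b_j$ to vanish, a contradiction.

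Combining this lower bound with item (1) gives $ind\ \mathbb N^k=k$, completing the proof. The only place requiring genuine care is this last coordinate comparison, and the delicacy there is purely bookkeeping: ensuring the relation is normalized so that $P$ and $N$ are disjoint before transporting it through $\varphi^{-1}$ and invoking the freeness of $\mathbb N^k$ on the $e_i$.
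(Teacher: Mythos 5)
Your proposal is correct, and there is nothing in the paper to diverge from: the paper states Lemma \ref{lmindex} with the remark that the properties ``are easy to prove'' and supplies no argument, so you are filling a genuine gap rather than paralleling an existing proof. All six items check out. Items (1), (3), (4), (5), (6) are handled exactly as the paper's introduction suggests (the tautological embedding, the monomorphisms $\iota_r$, restriction of an isomorphism to a submonoid -- noting correctly that $\varphi(F)$ contains $0_m$ and is closed under addition). You rightly identify item (2) as the only item with real content, and your argument there is sound: clearing denominators and separating a rational relation into parts supported on disjoint index sets $P$ and $N$ lets you transport the relation through $\varphi^{-1}$ using only additivity and injectivity, and disjointness of $P$ and $N$ then kills every coefficient upon comparing coordinates against the standard generators $e_i$ (the degenerate case $P=\varnothing$ is absorbed by the same comparison). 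It is worth observing that your independence argument is precisely the germ of the machinery the paper builds later: showing that injective additive images of $e_1,\dots,e_k$ stay $\mathbb Q$-linearly independent is in effect the statement $free\ \mathbb N^k=k$ together with monotonicity of $free$ under embeddings, which is how Theorem \ref{teoindice} computes indices in general. Note, however, that the paper's route to that theorem passes through Proposition \ref{propindexfree}, which itself invokes $ind\ \mathbb N^r=r$ from this very lemma, so your direct argument for item (2) is not merely an alternative but is needed to avoid circularity. One cosmetic simplification for item (5): the map $n\mapsto nf$ is itself additive, so when $f\neq 0_k$ it is already an isomorphism $\mathbb N\to\langle f\rangle$; there is no need to pass to its inverse.
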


Property (\ref{itemindH}) in Lemma \ref{lmindex} means that if $H$ is a $k$-monoid and $r\geq ind\ H$, then we can find an $r$-monoid $F$ isomorphic to $H$, but if $r<ind\ H$, there does not exist such $r$-monoid $F$. Thus, we can find $r$-monoids isomorphic to $H$ for some $r<k$ only if $ind\ H<k$. We will see how under certain conditions on $H$ and $1\leq r<k$ we can construct an $r$-monoid isomorphic to $H$ (Proposition \ref{propH|X} below).

\begin{proposition}
\label{propindexfree}
If $H$ is a free and nontrivial $k$-monoid, then $ind\ H=dim\ H$.
\end{proposition}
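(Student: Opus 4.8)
The plan is to establish the two inequalities $ind\ H\le dim\ H$ and $ind\ H\ge dim\ H$ separately. Write $d=dim\ H$. Since $H$ is free of dimension $d$, sending the standard basis of $\mathbb{N}^d$ to the elements of $\beta(H)$ gives an isomorphism $\mathbb{N}^d\to H$, so $H\cong\mathbb{N}^d$. As $\mathbb{N}^d$ is itself a $d$-monoid, we get $d\in\mathcal I(H)$, and hence $ind\ H\le d$. This settles the easy direction.

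For the reverse inequality I would show that $H$ admits no isomorphism onto an $r$-monoid with $r<d$. Suppose $\varphi:H\to K$ is an isomorphism onto an $r$-monoid $K\subseteq\mathbb{N}^r$. Because $\beta$ is defined intrinsically as $K^*\setminus(K^*+K^*)$, the additive bijection $\varphi$ carries $\beta(H)$ bijectively onto $\beta(K)$: indeed $\varphi$ sends $H^*$ onto $K^*$ and $H^*+H^*$ onto $K^*+K^*$, so it preserves the set difference. Writing $\beta(H)=\{h_1,\ldots,h_d\}$, the set $\beta(K)=\{g_1,\ldots,g_d\}$ with $g_i=\varphi(h_i)$ is a minimal generating set of $K$. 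Moreover $K\cong H\cong\mathbb{N}^d$, so $K$ is free on $g_1,\ldots,g_d$, meaning every element of $K$ has a unique expression $\sum_i a_i g_i$ with $a_i\in\mathbb{N}$.

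The crucial step is to upgrade this combinatorial freeness to genuine $\mathbb{Q}$-linear independence of the vectors $g_1,\ldots,g_d\in\mathbb{Q}^r$. I would argue directly: suppose $\sum_i c_i g_i=0$ with $c_i\in\mathbb{Z}$, and split each coefficient as $c_i=a_i-b_i$ with $a_i,b_i\in\mathbb{N}$, so that $\sum_i a_i g_i=\sum_i b_i g_i$ as elements of $K$. Uniqueness of factorization in the free monoid $K$ forces $a_i=b_i$ for every $i$, whence $c_i=0$. Thus $g_1,\ldots,g_d$ are $\mathbb{Z}$-linearly independent, and therefore $\mathbb{Q}$-linearly independent in $\mathbb{Q}^r$; since $\mathbb{Q}^r$ is $r$-dimensional, we conclude $d\le r$. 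Consequently every $r\in\mathcal I(H)$ satisfies $r\ge d$, giving $ind\ H\ge d$ and, combined with the first part, the equality $ind\ H=dim\ H$.

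The step I expect to be the real content is this last upgrade from natural-number factorization relations to $\mathbb{Q}$-linear independence; the only subtlety is the passage from relations with nonnegative coefficients to relations with integer coefficients, which is handled cleanly by the splitting $c_i=a_i-b_i$.
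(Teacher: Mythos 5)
Your proof is correct. It differs from the paper's in an instructive way: the paper disposes of this proposition in one line, writing $H\cong\mathbb N^r$ for $r=dim\ H$ and then invoking Lemma \ref{lmindex}, specifically item (2) ($ind\ \mathbb N^k=k$) together with item (6) (isomorphic monoids have equal index) --- both of which the paper declares ``easy to prove'' and never actually proves. Your easy direction ($ind\ H\le d$ via $H\cong\mathbb N^d$) coincides with the paper's reduction, but your hard direction supplies exactly the missing content behind Lemma \ref{lmindex}(2): you show that any $r$-monoid $K$ isomorphic to $\mathbb N^d$ has its minimal generators $g_1,\ldots,g_d$ (which you correctly identify as the image of $\beta(H)$ under the intrinsic characterization $\beta(K)=K^*\setminus(K^*+K^*)$) $\mathbb Q$-linearly independent in $\mathbb Q^r$, forcing $d\le r$. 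The passage from unique nonnegative factorizations to $\mathbb Z$-linear independence via the splitting $c_i=a_i-b_i$ is sound, since $\sum_i a_ig_i=\sum_i b_ig_i$ are two $\mathbb N$-representations of one element of the free monoid $K$. What your route buys is a self-contained argument; what the paper's buys is brevity, at the cost of leaving the lower bound resting on an unproved lemma. Your linear-independence mechanism is also essentially the same tool the paper deploys later (rank of $M^H$, independence over $H$) in Theorem \ref{teoindice}, so your proof fits naturally into the paper's framework rather than departing from it.
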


\begin{proof}
If $r=dim\ H$, then $H\cong \mathbb N^r$ and therefore $ind\ H=ind\ \mathbb N^r=r=dim\ H$. 
\end{proof}

Let $I_k=\{1,2,\ldots,k\}$. For a subset $X=\{a_1,a_2,\ldots, a_r\}$ of $I_k$, where $a_1<a_2<\cdots<a_r$, let $\eta_X:I_{r}\to I_k$ be defined by $\eta_X(j)=a_j$, for $j\in I_{r}$. For $h\in \mathbb N^k$ we define $h|X:=h\circ \eta_X\in \mathbb N^{r}$; for $A\subseteq \mathbb N^k$ we define $A|X:=\{h|X:h\in A\}\subseteq \mathbb N^r$.

\begin{proposition}
\label{propH|X}
Let $H$ be a submonoid of $\mathbb N^k$ and $X\subseteq I_k$ nonempty. Assume that for each $j\in I_k\setminus X$, $\alpha_j^H\in span\ \{\alpha_i^H:i\in X\}$. Then the map $\phi:H\to H|X$ given by $\phi(h)=h|X$ for all $h\in H$ is an isomorphism.
\end{proposition}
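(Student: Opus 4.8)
The plan is to verify the three defining requirements of an isomorphism—additivity, surjectivity, and injectivity—with essentially all of the content concentrated in injectivity. First I would note that $\phi$ is additive almost by definition: since restricting a sum to the coordinates indexed by $X$ is the sum of the restrictions, $(h_1+h_2)|X = h_1|X + h_2|X$, so $\phi$ preserves the monoid operation. Surjectivity is also immediate, because $H|X$ was defined precisely as the image $\{h|X : h\in H\}$ of $\phi$.

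The heart of the argument is injectivity, and this is exactly where the hypothesis on the functionals $\alpha_j^H$ enters. Suppose $h_1, h_2\in H$ satisfy $\phi(h_1)=\phi(h_2)$, that is, $h_1(i)=h_2(i)$ for every $i\in X$; I must deduce $h_1(j)=h_2(j)$ for every $j\in I_k\setminus X$ as well. Fixing such a $j$, the assumption $\alpha_j^H\in span\ \{\alpha_i^H:i\in X\}$ furnishes rationals $(c_i)_{i\in X}$ with $\alpha_j^H=\sum_{i\in X}c_i\,\alpha_i^H$, an identity of functions on $H$. Unwinding the definition $\alpha_i^H(f)=f(i)$, this reads $f(j)=\sum_{i\in X}c_i\,f(i)$ for every $f\in H$.

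The point I would emphasize is that this is a \emph{single} functional identity valid on all of $H$, so the same coefficients $c_i$ govern both $h_1$ and $h_2$. Applying the identity to $f=h_1$ and to $f=h_2$, and using $h_1(i)=h_2(i)$ for $i\in X$, yields $h_1(j)=\sum_{i\in X}c_i\,h_1(i)=\sum_{i\in X}c_i\,h_2(i)=h_2(j)$. Since $h_1$ and $h_2$ then agree on all of $I_k$, we get $h_1=h_2$, so $\phi$ is injective and hence an isomorphism.

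I do not expect a genuine obstacle here; the only subtlety is logical rather than computational. One must read $\alpha_j^H\in span\ \{\alpha_i^H:i\in X\}$ correctly as a relation among functions on $H$—so that the coefficients are uniform over the entire monoid—rather than as a merely pointwise statement. It is precisely this uniformity that allows the common values on $X$ to cancel and forces agreement on the remaining coordinates.
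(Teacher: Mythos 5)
Your proof is correct and follows essentially the same route as the paper's: additivity and surjectivity are immediate, and injectivity comes from applying the coefficients $c_i$ of the spanning relation $\alpha_j^H=\sum_{i\in X}c_i\alpha_i^H$ (valid uniformly on all of $H$) to two elements agreeing on $X$. Your closing remark about reading the hypothesis as an identity of functionals rather than a pointwise statement is exactly the point the paper's proof exploits.
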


\begin{proof}
It is easy to see that $\phi$ is additive and surjective. By hypothesis, if $j\in I_k\setminus X$, then for each $i\in X$ there exists $c_{i,j}\in \mathbb Q$ such that $h(j)=\sum_{i\in X}c_{i,j}h(i)$ for all $h\in H$. If $g|X=h|X$ for $g,h\in H$, then $g(i)=h(i)$ for all $i\in X$ and then, for all $j\in I_k\setminus X$, $g(j)=\sum_{i\in X}c_{i,j}g(i)=\sum_{i\in X}c_{i,j}h(i)=h(j)$. Thus $g=h$. This shows that $\phi$ is injective and therefore $\phi$ is an isomorphism.
\end{proof}

\begin{proposition}
\label{propindependentbeta(H)}
Let $H$ be a nontrivial $k$-monoid and $X\subseteq I_k$ nonempty. Then $X$ is independent over $H$ if and only if $X$ is independent over $\beta(H)$.
\end{proposition}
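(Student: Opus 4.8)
The plan is to prove the two implications separately, since one of them comes essentially for free from the monotonicity facts already recorded in the excerpt. First I note that because $H$ is nontrivial, $\beta(H)=H^*\setminus(H^*+H^*)$ is a nonempty subset of $H$, so $\varnothing\neq\beta(H)\subseteq H$ and all the spaces $\mathbb Q^{\beta(H)}$, $\mathbb Q^H$ are defined. The excerpt already observes that for $\varnothing\neq B\subseteq A$, independence over $B$ implies independence over $A$. Applying this with $B=\beta(H)$ and $A=H$ immediately yields the ``if'' direction: if $X$ is independent over $\beta(H)$, then $X$ is independent over $H$.

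For the converse I would argue the contrapositive: if $X$ is dependent over $\beta(H)$, then $X$ is dependent over $H$. A dependence relation over $\beta(H)$ supplies coefficients $c_i\in\mathbb Q$, not all zero, with $\sum_{i\in X}c_i f(i)=0$ for every $f\in\beta(H)$. The goal is to upgrade this to the same relation holding on all of $H$, that is, $\sum_{i\in X}c_i h(i)=0$ for every $h\in H$, which is exactly dependence of $X$ over $H$ witnessed by the same nonzero tuple $(c_i)$.

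The key step is to use that $\beta(H)$ generates $H$. Writing an arbitrary $h\in H$ as a finite nonnegative-integer combination $h=\sum_j n_j f_j$ with $f_j\in\beta(H)$ and $n_j\in\mathbb N$, and using that addition in $\mathbb N^k$ is coordinatewise so that $h(i)=\sum_j n_j f_j(i)$, I would interchange the two finite sums:
\[
\sum_{i\in X}c_i h(i)=\sum_j n_j\Big(\sum_{i\in X}c_i f_j(i)\Big)=\sum_j n_j\cdot 0=0 .
\]
This finishes the contrapositive, and together with the first paragraph it proves the proposition.

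I do not expect a genuine obstacle here; the proof is short and hinges on the single observation that coordinate evaluation $\alpha_i$ respects the additive structure, which is precisely what lets a linear relation valid on the generators $\beta(H)$ propagate to all of $H$. The only points requiring care are bookkeeping ones: confirming that nontriviality of $H$ guarantees $\beta(H)\neq\varnothing$ so that the ambient vector spaces exist, and recalling that $\beta(H)$ is indeed a generating set so that every $h\in H$ admits the required finite expansion.
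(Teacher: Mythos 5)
Your proof is correct and follows essentially the same route as the paper: the easy direction via the monotonicity of independence under $\beta(H)\subseteq H$, and the converse by the contrapositive, propagating a dependence relation from $\beta(H)$ to all of $H$ by expanding $h$ as a nonnegative integer combination of minimal generators and interchanging the two finite sums. Your added remarks (nontriviality of $H$ guaranteeing $\beta(H)\neq\varnothing$, and $\beta(H)$ being a generating set) are correct bookkeeping that the paper leaves implicit.
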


\begin{proof}
Since $\beta(H)\subseteq H$, if $X$ is independent over $\beta(H)$, then $X$ is also independent over $H$. Now, if $X$ is dependent over $\beta(H)$, then there are rationals numbers $c_i$, $i\in X$, not all zero, such that $\sum_{i\in X}c_i\alpha_i^{\beta(H)}=0$; this means that $\sum_{i\in X}c_ig(i)=0$ for all $g\in\beta(H)$. If $h\in H$, then $h=\sum_{g\in \beta(H)}d_gg$ for some nonnegative integers $d_g$, $g\in \beta(H)$, so 
\[\sum_{i\in X}c_i\alpha_i^{H}(h)=\sum_{i\in X}c_i\left(\sum_{g\in\beta(H)}d_gg(i)\right)=\sum_{g\in\beta(H)}d_g\left(\sum_{i\in X}c_ig(i)\right)=0.\]
Thus, $X$ is dependent over $H$.
\end{proof}

\begin{corollary}
\label{corindependentbeta(H)}
If $H$ is a nontrivial $k$-monoid, then $X\subseteq I_k$ is maximal independent over $H$ if and only if $X$ is maximal independent over $\beta(H)$. 
\end{corollary}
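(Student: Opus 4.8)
The plan is to deduce this entirely from Proposition \ref{propindependentbeta(H)}, which already tells us that for every nonempty $X\subseteq I_k$, the set $X$ is independent over $H$ if and only if it is independent over $\beta(H)$. Since maximal independence is built purely out of the independence relation---namely, $X$ is maximal independent over $A$ precisely when $X$ is independent over $A$ and no $Y$ with $X\subsetneq Y\subseteq I_k$ is independent over $A$---the equivalence of the two independence notions should transfer directly to their maximal versions.

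Concretely, first I would assume $X$ is maximal independent over $H$. Then $X$ is independent over $H$, so by Proposition \ref{propindependentbeta(H)} it is independent over $\beta(H)$. To check maximality over $\beta(H)$, suppose toward a contradiction that some $Y$ with $X\subsetneq Y\subseteq I_k$ is independent over $\beta(H)$; note $Y$ is nonempty since it contains the nonempty set $X$, so Proposition \ref{propindependentbeta(H)} applies and gives that $Y$ is independent over $H$, contradicting the maximality of $X$ over $H$. Hence no such $Y$ exists and $X$ is maximal independent over $\beta(H)$. The converse direction is completely symmetric, again using that any $Y\supsetneq X$ is nonempty so that the proposition can be invoked in both directions.

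I do not expect a genuine obstacle here, as the corollary is a formal consequence of Proposition \ref{propindependentbeta(H)}. The only point requiring a little care is the nonemptiness hypothesis in the definition of independence: the proposition is stated for nonempty subsets only, so I must observe that whenever $X$ is nonempty (which it is, being independent) every strictly larger $Y$ is automatically nonempty, keeping us within the proposition's scope. Alternatively, one could phrase the whole argument through the basis characterization---$X$ is maximal independent over $A$ if and only if $\{\alpha_i^A:i\in X\}$ is a basis of $vect\ A$---but the direct transfer of the independence relation is cleaner and avoids comparing the spaces $vect\ H$ and $vect\ \beta(H)$ explicitly.
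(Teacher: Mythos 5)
Your proof is correct and matches the paper's intent: the paper states Corollary \ref{corindependentbeta(H)} without proof, treating it as an immediate formal consequence of Proposition \ref{propindependentbeta(H)}, which is exactly the transfer argument you carry out. Your extra care about nonemptiness of the supersets $Y$ is sound and fills in the only detail the paper leaves implicit.
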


If $H$ is a $k$-monoid, $F$ is an $l$-monoid and $\phi:H\to F$ is an additive map, then we can define a $\mathbb Q$-linear map $\phi^{\ast}:\mathbb Q^F\to \mathbb Q^H$ given by $\phi^{\ast}(\alpha)=\alpha\circ \phi$ for $\alpha\in \mathbb Q^F$.

\begin{proposition}
\label{propindice4}
Let $H$ be a nontrivial $k$-monoid and $X\subseteq I_k$ independent over $H$. If $r=|X|$, then $I_r$ is independent over $H|X$.
\end{proposition}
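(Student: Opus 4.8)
The plan is to work with the additive map $\phi:H\to H|X$, $\phi(h)=h|X$, which is additive and surjective directly from the definition of $H|X$ (as already noted in the proof of Proposition \ref{propH|X}), and to study its associated pullback $\phi^{\ast}:\mathbb Q^{H|X}\to \mathbb Q^H$, $\phi^{\ast}(\alpha)=\alpha\circ\phi$. The whole statement should reduce to a single computation identifying $\phi^{\ast}$ on the coordinate functionals $\alpha_j^{H|X}$, after which linear independence is inherited from the hypothesis on $X$.

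First I would record the defining formula. Writing $X=\{a_1,a_2,\ldots,a_r\}$ with $a_1<a_2<\cdots<a_r$, so that $\eta_X(j)=a_j$, the definition $h|X=h\circ\eta_X$ gives $(h|X)(j)=h(a_j)$. Hence for every $j\in I_r$ and every $h\in H$,
\[\phi^{\ast}(\alpha_j^{H|X})(h)=\alpha_j^{H|X}(\phi(h))=\alpha_j^{H|X}(h|X)=(h|X)(j)=h(a_j)=\alpha_{a_j}^H(h),\]
that is, $\phi^{\ast}(\alpha_j^{H|X})=\alpha_{a_j}^H$. This is the key step and really the only computation in the argument: the pullback carries the $j$-th coordinate functional on $H|X$ to the $a_j$-th coordinate functional on $H$.

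With this identity in hand, the conclusion follows from linearity of $\phi^{\ast}$ together with the hypothesis. Suppose $\sum_{j\in I_r}c_j\alpha_j^{H|X}=0$ in $\mathbb Q^{H|X}$ for some $c_j\in\mathbb Q$. Applying the $\mathbb Q$-linear map $\phi^{\ast}$ and using the formula above yields $\sum_{j\in I_r}c_j\alpha_{a_j}^H=0$ in $\mathbb Q^H$. Since $X=\{a_1,\ldots,a_r\}$ is independent over $H$, the family $\{\alpha_{a_j}^H:j\in I_r\}=\{\alpha_i^H:i\in X\}$ is linearly independent, so $c_j=0$ for all $j$. Therefore $\{\alpha_j^{H|X}:j\in I_r\}$ is linearly independent, i.e. $I_r$ is independent over $H|X$.

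I do not expect any serious obstacle. It is worth remarking that surjectivity of $\phi$ (equivalently, injectivity of $\phi^{\ast}$) is not even needed for this direction, since linearity of $\phi^{\ast}$ alone propagates a relation among the $\alpha_j^{H|X}$ to a relation among the $\alpha_{a_j}^H$. The only point demanding care is the index bookkeeping through $\eta_X$: one must match $j\in I_r$ with $a_j=\eta_X(j)\in X$ rather than with $j$ itself, which is exactly what makes the pushed-forward relation a relation over the set $X$ on which independence is assumed.
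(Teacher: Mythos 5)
Your proposal is correct and follows essentially the same route as the paper's own proof: both compute $\phi^{\ast}(\alpha_j^{H|X})=\alpha_{a_j}^H$ for the restriction map $\phi(h)=h|X$ and then transport a putative linear relation among the $\alpha_j^{H|X}$ to one among the $\alpha_{a_j}^H$, contradicting (or directly using) the independence of $X$ over $H$. The only difference is cosmetic: you argue contrapositive-free while the paper phrases it by contradiction, and your remark that surjectivity of $\phi$ is not needed is accurate.
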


\begin{proof}
Write $X=\{a_1,a_2,\ldots, a_r\}$ where $a_1<a_2<\cdots<a_r$. If $h\in H$, then $(h|X)(j)=h(a_j)$, that is,  $\alpha_j^{F|X}(h|X)=\alpha_{a_j}^H(h)$ for all $1\leq j\leq r$. If $\phi:H\to H|X$ is the function defined by $\phi(h)=h|X$ for $h\in H$, then we have that $\phi^*(\alpha_j^{H|X})=\alpha_j^{H|X}\circ\phi=\alpha_{a_j}^H$, $j=1,\ldots, r$. 

If $I_r$ were dependent over $H|X$, then $\sum_{j=1}^rc_j\alpha_i^{H|X}=0$, where $c_1,\ldots, c_r\in\mathbb Q$ are not all zero. Hence 
\begin{equation*}
  0=\phi^{\ast}(\sum_{j=1}^rc_j\alpha_j^{H|X})=\sum_{j=1}^rc_j\phi^{\ast}(\alpha_j^{H|X})=\sum_{j=1}^rc_j\alpha_{a_j}^H,  
\end{equation*}
that is, $X$ would be dependent over $H$. 
\end{proof}

\begin{proposition}
\label{propfinitelyFsubseteqH}
Let $H$ be a nontrivial $k$-monoid and $X\subseteq I_k$ maximal independent over $H$.
Then, there exists a finitely generated $k$-monoid $F$ such that $F\subseteq H$ and $X$ is maximal independent over $F$.
\end{proposition}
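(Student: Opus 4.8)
The plan is to reformulate independence as a finite-dimensional spanning condition and then select finitely many witnesses. Write $r=|X|$ and $X=\{a_1<\cdots<a_r\}$. For any nonempty $A\subseteq\mathbb N^k$, the key observation is that $X$ is independent over $A$ precisely when the vectors $\{h|X:h\in A\}\subseteq\mathbb Q^r$ span $\mathbb Q^r$: a nontrivial relation $\sum_{i\in X}c_i\alpha_i^A=0$ is exactly a nonzero vector $(c_i)_{i\in X}$ annihilating every $h|X$ (since the $t$-th coordinate of $h|X$ is $h(a_t)=\alpha_{a_t}^A(h)$), and such a vector exists if and only if these evaluation vectors fail to span.

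First I would apply this with $A=H$. Since $X$ is in particular independent over $H$, the set $\{h|X:h\in H\}$ spans $\mathbb Q^r$, so I can pick $h_1,\ldots,h_r\in H$ with $\{h_t|X\}_{t=1}^r$ a basis of $\mathbb Q^r$. Set $F=\langle h_1,\ldots,h_r\rangle$. Then $F$ is a finitely generated $k$-monoid with $F\subseteq H$, and because $h_1,\ldots,h_r\in F$ already provide a spanning family, $\{h|X:h\in F\}$ spans $\mathbb Q^r$; by the reformulation, $X$ is independent over $F$.

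It remains to check that $X$ is maximal over $F$, i.e. that $\{\alpha_i^F:i\in X\}$ spans $vect\ F$. Here maximality transfers for free from $H$: since $X$ is maximal independent over $H$, for each $j\in I_k$ there are rationals $c_{i,j}$ with $h(j)=\sum_{i\in X}c_{i,j}h(i)$ for all $h\in H$, and this identity holds a fortiori for every $h\in F\subseteq H$. Hence $\alpha_j^F=\sum_{i\in X}c_{i,j}\alpha_i^F$ for each $j$, so $\{\alpha_i^F:i\in X\}$ spans $vect\ F$; combined with the independence established above, it is a basis of $vect\ F$, which is exactly the statement that $X$ is maximal independent over $F$.

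The one point requiring care—and the reason $F$ must be \emph{chosen} rather than taken arbitrarily—is that independence is only guaranteed to persist when passing to a \emph{larger} ambient set, not a smaller one, so shrinking $H$ down to $F$ could in principle destroy it. The selection in the second step is precisely what prevents this, and finite-dimensionality of $\mathbb Q^r$ is what makes finitely many generators suffice. By contrast, the maximality (spanning) condition is encoded by linear relations that are valid on all of $H$, so it descends to any submonoid automatically; thus the whole difficulty is concentrated in securing independence over a finitely generated piece.
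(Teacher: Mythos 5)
Your proof is correct, and it takes a genuinely different route from the paper's. The paper argues by filtration: it enumerates $\beta(H)=\{h_1,h_2,\ldots\}$, sets $F_n=\langle h_1,\ldots,h_n\rangle$, chooses maximal independent subsets $X_1\subseteq X_2\subseteq\cdots\subseteq X$ over the $F_n$, lets this chain stabilize at $X_N$ (finiteness of $X$), and then shows, via uniqueness of coordinates with respect to the basis $\{\alpha_i^{F_n}:i\in X_N\}$, that if $X_N\neq X$ a single relation $\alpha_j^{F_n}=\sum_{i\in X_N}c_i\alpha_i^{F_n}$ would be independent of $n$ and hence hold on all of $H$, contradicting the independence of $X$ over $H$; it then takes $F=F_N$. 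You replace all of this by a duality observation: $X$ is independent over $A$ exactly when $\{h|X:h\in A\}$ spans $\mathbb Q^{|X|}$, because a nonzero annihilating covector $(c_i)_{i\in X}$ is precisely a nontrivial relation among the $\alpha_i^A$. This lets you select $r=|X|$ witnesses $h_1,\ldots,h_r\in H$ in one step, and you correctly isolate the asymmetry that makes a choice necessary at all: independence need not survive shrinking the ambient set, while the spanning half of maximality is expressed by relations valid on all of $H$ and so descends to every submonoid for free. Your argument is shorter and in fact buys more: since $h_1|X,\ldots,h_r|X$ are linearly independent in $\mathbb Q^r$, the elements $h_1,\ldots,h_r$ are already linearly independent in $\mathbb Q^k$, so your $F$ is a \emph{free} submonoid of dimension $|X|$ --- exactly the object that the proof of Theorem \ref{teoindice} later extracts from $\beta(L)$ by a matrix-rank argument, so your construction would also streamline that proof. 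What the paper's construction buys in exchange is that its $F=F_N$ is generated by an initial segment of $\beta(H)$, so that $\beta(F)\subseteq\beta(H)$; this extra feature is what lets Proposition \ref{propindexfree2} locate its free set $B$ inside $\beta(H)$, whereas your generators $h_1,\ldots,h_r$ are arbitrary elements of $H$.
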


\begin{proof}
If $H$ is finitely generated, we take $F=H$. Suppose that $H$ is not finitely generated and that $\beta(H)=\{h_1,h_2,\ldots,h_n,\ldots\}$. For each $n\in \mathbb N^+$ let $F_n=\langle h_1,h_2,\ldots,h_n\rangle\subseteq H$. 

For each $n\in \mathbb N^+$, the set $\{\alpha_i^{F_n}:i\in X\}$ spans $vect\ F_n$. In fact, given that $\{\alpha_i^{H}:i\in X\}$ is a basis of $vect\ H$, if $j\in I_k\setminus X$, then $\alpha_j^H=\sum_{i\in X}c_i\alpha_i^H$ for some $c_i\in \mathbb Q$, $i\in X$. Now, since the relation $\alpha_r^H|_{F_n}=\alpha_r^{F_n}$ holds for $1\leq r\leq k$, it results that 
\[\alpha_j^{F_n}=\alpha_j^H|_{F_n}=\sum_{i\in X}c_i\alpha_i^H|_{F_n}=\sum_{i\in X}c_i\alpha_i^{F_n}.\]
This shows that $\{\alpha_i^{F_n}:i\in X\}$ spans $vect\ F_n$. Now, $\{\alpha_i^{F_1}:i\in X\}$ spans $vect\ F_1$ and so there exists $X_1\subseteq X$ such that $\{\alpha_i^{F_1}:i\in X_1\}$ is a basis of $vect\ F_1$, that is, $X_1$ is maximal independent over $F_1$. Since $F_1\subseteq F_2$, $X_1$ is independent over $F_2$, and this means that $\{\alpha_i^{F_2}:i\in X_1\}$ is linearly independent in $vect\ F_2$. Now, $\{\alpha_i^{F_2}:i\in X\}$ spans $vect\ F_2$ and so there exists $X_2\subseteq X$ that contains $X_1$ and is maximal independent over $F_2$. By continuing this way we construct an increasing sequence of subsets of $X$, $X_1\subseteq X_2\subseteq \cdots\subseteq X_n\subseteq \cdots\subseteq X$ in such a way that $X_n$ is maximal independent over $F_n$ for all $n\in \mathbb N^+$. Since $X$ is finite, the sequence $\{X_n\}$ stabilizes and so there exists $N\in \mathbb N^+$ such that $X_n=X_N$ for all $n\geq N$.

We claim that $X_N=X$. Assume, on the contrary that $X_N\neq X$. Let $n\geq N$. Since $\{\alpha_i^{F_n}:i\in X_N\}$ is a basis of $vect\ F_n$, there exist unique $c_{i,n}\in \mathbb Q$, $i\in X_N$, such that $\alpha_j^{F_n}=\sum_{i\in X_N}c_{i,n}\alpha_i^{F_n}$. 

Now, for $n\geq N$, the relations $\alpha_r^{F_{n+1}}|_{F_n}=\alpha_r^{F_n}$, $1\leq r\leq k$, implies that
\[\alpha_j^{F_n}=\alpha_i^{F_{n+1}}|_{F_n}=\sum_{i\in X_N}c_{i,n+1}\alpha_i^{F_{n+1}}|_{F_n}=\sum_{i\in X_N}c_{i,n+1}\alpha_i^{F_n},\]
so that by uniqueness of the $c_{i,n}$, it follows that $c_{i,n}=c_{i,n+1}$ for all $i\in X_N$. In particular, $c_{i,n}=c_{i,N}$ for all $i\in X_N$ and all $n\geq N$.

Let us set $c_i=c_{i,N}$ for each $i\in X_N$. Since $X\neq X_N$, there is some $j\in X\setminus X_N$. For $n\geq N$ we have $\alpha_j^{F_n}=\sum_{i\in X_N}c_i\alpha_i^{F_n}$. Let $h\in H$. Then $h=a_1g_1+\cdots a_tg_t$ where $a_1,\ldots,a_t\in \mathbb N$ and $g_1,\ldots, g_t\in\beta(H)$. Choose $n\geq N$ big enough so that $\{g_1,\ldots, g_t\}\subseteq \{h_1,\ldots, h_n\}\subseteq F_n$. Then $h\in F_n$ and \[\alpha_j^{H}(h)=\alpha_j^{F_n}(h)=\sum_{i\in X_N}c_i\alpha_i^{F_n}(h)=\sum_{i\in X_N}c_i\alpha_i^{H}(h).\]
This shows that $\alpha_j^H=\sum_{i\in X_N}c_i\alpha_i^{H}$, that contradicts the fact that $X$ is independent over $H$. 

Let $F=F_N$. Then $F\subseteq H$ is finitely generated and $X$ is independent over $F$. That $X$ is maximal independent over $F$ follows from the fact that if $X\subseteq Y\subseteq I_k$ and $Y$ is independent over $F$, then, since $F\subseteq H$, $Y$ is also independent over $H$, and thus $Y=X$.
\end{proof}

Now we are ready to prove our main result.

\begin{theorem}
\label{teoindice}
Let $H$ be a nontrivial $k$-monoid and $X\subseteq I_k$ maximal independent over $H$. Then 
\begin{equation*}
  ind\ H=|X|=free\ H.  
\end{equation*}
Moreover, if $H$ is finitely generated, then 
\begin{equation*}
    ind\ H=rank\ M^H.
\end{equation*}
\end{theorem}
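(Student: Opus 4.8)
The plan is to prove the chain $ind\ H\le |X|\le free\ H\le ind\ H$, forcing equality throughout, and then to identify $rank\ M^H$ with $|X|$ in the finitely generated case. First I would establish $ind\ H\le |X|$. Since $X$ is maximal independent over $H$, the set $\{\alpha_i^H:i\in X\}$ is a basis of $vect\ H$, so for every $j\in I_k\setminus X$ we have $\alpha_j^H\in span\ \{\alpha_i^H:i\in X\}$. This is precisely the hypothesis of Proposition \ref{propH|X}, whence $\phi\colon H\to H|X$, $\phi(h)=h|X$, is an isomorphism. As $H|X$ is an $r$-monoid with $r=|X|$, Lemma \ref{lmindex} gives $ind\ H\le r=|X|$.

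Next I would prove $|X|\le free\ H$ by exhibiting a free submonoid of $H$ of dimension $r=|X|$. By Proposition \ref{propindice4}, $I_r$ is independent over $H|X$; this says that no nonzero linear combination of the coordinate functionals $\alpha_1^{H|X},\ldots,\alpha_r^{H|X}$ vanishes on $H|X$, equivalently that $H|X$ lies in no hyperplane of $\mathbb Q^r$ through the origin and therefore spans $\mathbb Q^r$. I would then pick $h_1,\ldots,h_r\in H$ whose restrictions $h_1|X,\ldots,h_r|X$ form a $\mathbb Q$-basis of $\mathbb Q^r$. Their $\mathbb Q$-linear independence forces unique nonnegative-integer representations in $\langle h_1,\ldots,h_r\rangle$: if $\sum_i a_ih_i=\sum_i b_ih_i$, restricting to $X$ and using independence gives $a_i=b_i$ for all $i$. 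Hence $\langle h_1,\ldots,h_r\rangle\cong \mathbb N^r$ is a free submonoid of $H$ of dimension $r$, so $free\ H\ge r=|X|$.

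For the last inequality $free\ H\le ind\ H$, I would take an arbitrary free submonoid $F\subseteq H$; if $F$ is nontrivial then $dim\ F=ind\ F$ by Proposition \ref{propindexfree} and $ind\ F\le ind\ H$ by Lemma \ref{lmindex}, while the trivial case $dim\ F=0$ is immediate since $ind\ H\ge 1$. Taking the maximum over all such $F$ yields $free\ H\le ind\ H$, and the three inequalities together give $ind\ H=|X|=free\ H$. For the finitely generated case with $\beta(H)=\{h_1,\ldots,h_r\}$, I would observe that the $i$-th row $(h_1(i),\ldots,h_r(i))$ of $M^H$ is exactly the coordinate vector of $\alpha_i^{\beta(H)}$ relative to the enumeration $h_1,\ldots,h_r$; thus the row space of $M^H$ is $vect\ \beta(H)$, and $rank\ M^H$ equals the dimension of $vect\ \beta(H)$, which is the cardinality of any maximal independent subset of $I_k$ over $\beta(H)$. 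By Corollary \ref{corindependentbeta(H)} this cardinality equals $|X|$, so $rank\ M^H=|X|=ind\ H$.

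I expect the middle inequality $|X|\le free\ H$ to be the main obstacle, since it is the only step requiring an actual construction: one must convert the linear independence of the coordinate functionals on $H|X$ into the spanning statement $span_{\mathbb Q}(H|X)=\mathbb Q^r$, and then check that $\mathbb Q$-independent generators really do generate a free monoid with unique representations rather than merely a torsion-free one. The remaining inequalities and the rank computation are bookkeeping on top of the propositions already proved, so I do not anticipate difficulty there; in particular Proposition \ref{propfinitelyFsubseteqH} is not needed for this route, though it would supply an alternative reduction to the finitely generated case.
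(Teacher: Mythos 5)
Your proof is correct, and its key step takes a genuinely different route from the paper's. The inequalities $ind\ H\le |X|$ (via Proposition \ref{propH|X} and Lemma \ref{lmindex}) and $free\ H\le ind\ H$ (via Proposition \ref{propindexfree} and Lemma \ref{lmindex}) coincide with the paper's, and your identification $rank\ M^H=|X|$ through the observation that the rows of $M^H$ are the coordinate vectors of the $\alpha_i^{\beta(H)}$, combined with Corollary \ref{corindependentbeta(H)}, is essentially the computation the paper performs (phrased there through columns). The difference is in producing a free submonoid of dimension $|X|$: the paper first invokes Proposition \ref{propfinitelyFsubseteqH} --- a stabilization argument along a chain of finitely generated submonoids --- to obtain a finitely generated $L\subseteq H$ with $X$ maximal independent over $L$, and then extracts $|X|$ linearly independent columns of the finite matrix $M^L$, i.e.\ elements of $\beta(L)$. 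You instead dualize directly on the possibly infinite monoid: independence of $I_r$ over $H|X$ (Proposition \ref{propindice4}) means no nonzero linear functional annihilates $H|X$, so $H|X$ spans $\mathbb Q^r$, a basis $h_1|X,\ldots,h_r|X$ can be chosen among restrictions of elements of $H$, and uniqueness of representations in $\langle h_1,\ldots,h_r\rangle$ follows by restricting any relation to $X$ --- all of which is sound. Your route renders Proposition \ref{propfinitelyFsubseteqH}, the technically heaviest ingredient of the paper's proof, unnecessary for the theorem (it remains of independent interest); conceptually, the paper's appeal to row rank equals column rank for $M^L$ is exactly the finite-dimensional shadow of your duality argument, so you have in effect inlined that linear algebra in a way that works uniformly without the reduction to finite generation. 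Your explicit handling of the trivial free submonoid in the inequality $free\ H\le ind\ H$ is a minor point the paper glosses over.
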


\begin{proof}
By Proposition \ref{propH|X}, $H\cong  H|X$ and by Lemma \ref{lmindex}, since $H|X$ is a $|X|$-monoid, $ind\ H=ind\ (H|X)\leq |X|$. Besides, if $F\subseteq H$ is a free $k$-monoid, then by Proposition \ref{propindexfree} and Lemma \ref{lmindex}, $dim\ L=ind\ L\leq ind\ H$, so $free\ H\leq ind\ H$.

To complete the proof, we construct a free $k$-monoid $F\subseteq H$ such that $dim\ F=|X|$. 

By Proposition \ref{propfinitelyFsubseteqH}, there exists a $k$-monoid $L\subseteq H$ such that $X$ is maximal independent over $L$. Consider the matrix $M^L$, whose columns are basically the elements in $\beta(L)$. By Corollary \ref{corindependentbeta(H)}, $X$ is maximal independent over $\beta(L)$. This means that the matrix $M^L$ has rank $|X|$. Since the rank of $M^L$ equals the maximal number of linearly independent columns of $M^L$, we find that there are $|X|$ elements of $\beta(L)$, say $h_1,h_2,\ldots, h_{|X|}$, that are linearly independent in $\mathbb Q^k$.

Let $F=\langle h_1,h_2,\ldots, h_{|X|}\rangle$. Then $F$ is a free $k$-monoid, $F\subseteq L\subseteq H$ and $dim\ F=|X|$. 

Note that in case $H$ is finitely generated, we can take $L=M$ and have therefore $ind\ H=rank\ M^H$.
\end{proof}

\begin{corollary}
\label{corindexindependent}
Let $H$ be a nontrivial $k$-monoid and $X\subseteq I_k$ independent over $H$. Then $ind\ (H|X)=|X|$.
\end{corollary}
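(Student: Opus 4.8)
The plan is to apply Theorem \ref{teoindice} directly to the $r$-monoid $H|X$, where $r=|X|$, once a maximal independent subset of $I_r$ over $H|X$ has been identified. Since $H|X\subseteq \mathbb N^r$, the theorem will compute $ind\ (H|X)$ as the size of any set in $I_r$ that is maximal independent over $H|X$, so the whole task reduces to exhibiting such a set and checking that it has cardinality $|X|$.

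The main input I would invoke is Proposition \ref{propindice4}, which asserts precisely that, because $X$ is independent over $H$ with $|X|=r$, the full index set $I_r$ is independent over $H|X$. The crucial (and essentially free) observation is that $I_r$ is the entire ambient index set of $\mathbb N^r\supseteq H|X$, so no subset of $I_r$ can properly contain it; being independent over $H|X$, the set $I_r$ is therefore automatically maximal independent over $H|X$. Thus the maximal independent set called for by Theorem \ref{teoindice} can be taken to be all of $I_r$, of cardinality $r=|X|$.

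Before invoking the theorem I would verify that $H|X$ is nontrivial, since the theorem assumes this. Because $X$ is independent over $H$, the family $\{\alpha_i^H:i\in X\}$ is linearly independent in $\mathbb Q^H$ and hence contains no zero vector; fixing any $i\in X$, there is some $h\in H$ with $h(i)\neq 0$, so $h|X\neq 0_r$ and $H|X\neq\{0_r\}$. Applying Theorem \ref{teoindice} to $H|X$ with the maximal independent set $I_r$ then yields $ind\ (H|X)=|I_r|=r=|X|$, as desired.

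The argument is nearly immediate given the earlier results, so I do not expect a genuine obstacle; the only points that require a moment's care are the nontriviality of $H|X$ and the observation that an independent set coinciding with the whole ambient index set $I_r$ is vacuously maximal. Everything else is supplied by Proposition \ref{propindice4} and Theorem \ref{teoindice}.
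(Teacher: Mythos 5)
Your proposal is correct and follows exactly the paper's own route: Proposition \ref{propindice4} gives that $I_r$ is independent over $H|X$, hence trivially maximal independent, and Theorem \ref{teoindice} then yields $ind\ (H|X)=|I_r|=|X|$. Your explicit verification that $H|X$ is nontrivial is a small point the paper leaves implicit, and it is a worthwhile addition since Theorem \ref{teoindice} assumes it.
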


\begin{proof}
By Proposition \ref{propindice4}, if $r=|X|$ then $I_r$ is independent over $H|X$, and therefore, $I_r$ is maximal independent over $H|X$. By Theorem \ref{teoindice}, $ind\ (H|X)=|I_r|=r=|X|$.
\end{proof}

\begin{proposition}
\label{propindexfree2} Let $H$ be a nontrivial $k$-monoid. Then $r=ind\ H$ if and only if there exists $B\subseteq \beta(H)$ such that 
\begin{enumerate}
    \item $|B|=r$,
    \item $\langle B\rangle$ is free, and
    \item for any $h\in \beta(H)\setminus B$ there exist $c\in \mathbb N^+$ and $f,g\in \langle B\rangle$ such that $f+ch=g$. 
\end{enumerate}
\end{proposition}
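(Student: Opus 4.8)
The plan is to route everything through the identity $ind\ H = free\ H$ of Theorem \ref{teoindice} and to translate freeness into $\mathbb{Q}$-linear (in)dependence in $\mathbb{Q}^k$. The technical heart is an elementary equivalence I would record first: (a) if $b_1,\ldots,b_m \in H^*$ are $\mathbb{Q}$-linearly independent, then the map $\mathbb{N}^m \to \langle b_1,\ldots,b_m\rangle$ sending $(a_1,\ldots,a_m)$ to $\sum a_i b_i$ is an additive bijection, so $\langle b_1,\ldots,b_m\rangle$ is free of dimension $m$; and (b) conversely, the minimal generators of any free submonoid are $\mathbb{Q}$-linearly independent, since a nontrivial rational relation among them, after clearing denominators and separating positive from negative coefficients, produces two distinct representations of one element, contradicting uniqueness in a free monoid. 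I would also note that if $B \subseteq \beta(H)$ then each element of $B$ is irreducible in $H$, hence in $\langle B\rangle$, so $\beta(\langle B\rangle) = B$ and $dim\ \langle B\rangle = |B|$.

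For the forward implication, suppose $r = ind\ H$. By Theorem \ref{teoindice}, $r = free\ H$, and combining (a) and (b) shows that $free\ H$ equals the dimension of the subspace $V$ of $\mathbb{Q}^k$ spanned by $\beta(H)$: a maximal independent subset of $\beta(H)$ generates a free submonoid by (a), while any free submonoid has independent generators by (b), all lying in $H \subseteq V$. I would then choose $B \subseteq \beta(H)$ to be a maximal $\mathbb{Q}$-linearly independent subset; since $\mathbb{Q}^k$ is finite-dimensional such a $B$ exists and $|B| = \dim V = r$, giving (1). By (a), $\langle B\rangle$ is free, giving (2). For (3), let $h \in \beta(H) \setminus B$; by maximality $h \in span_{\mathbb{Q}} B$, so $h = \sum_{b\in B} q_b b$ with $q_b \in \mathbb{Q}$. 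Choosing a positive common denominator $c$ of the $q_b$ and setting $g = \sum_{cq_b > 0}(cq_b) b$ and $f = \sum_{cq_b < 0}(-cq_b) b$, both in $\langle B\rangle$, gives $f + ch = g$, which is (3).

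For the converse, assume $B \subseteq \beta(H)$ satisfies (1)--(3). By (1), (2) and the opening remark, $\langle B\rangle$ is a free submonoid of $H$ with $dim\ \langle B\rangle = r$; hence by Proposition \ref{propindexfree} and Lemma \ref{lmindex}(4), $r = ind\ \langle B\rangle \leq ind\ H$. For the reverse inequality I would use (3) to show $H \subseteq span_{\mathbb{Q}} B$: every $h \in \beta(H)\setminus B$ satisfies $h = (g - f)/c \in span_{\mathbb{Q}} B$, every $h \in B$ trivially lies there, and since $H = \langle \beta(H)\rangle$ consists of $\mathbb{N}$-combinations of $\beta(H)$, all of $H$ lies in the $r$-dimensional space $span_{\mathbb{Q}} B$. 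Consequently the generators of any free submonoid $F \subseteq H$ are, by (b), $\mathbb{Q}$-linearly independent elements of $span_{\mathbb{Q}} B$, so $dim\ F \leq r$; thus $free\ H \leq r$, and by Theorem \ref{teoindice}, $ind\ H = free\ H \leq r$. Combining the two inequalities yields $ind\ H = r$.

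The step I expect to require the most care is the clearing-of-denominators argument used in both directions: one must split the rational coefficients by sign so as to land genuinely inside $\langle B\rangle$, which admits only nonnegative integer coefficients, rather than merely in its difference group, and then verify that the produced relation $f + ch = g$ has $c \in \mathbb{N}^+$ and $f,g \in \langle B\rangle$. The only other subtlety is the bookkeeping identity $free\ H = \dim V$, which is precisely what licenses the application of Theorem \ref{teoindice}; it is obtained from (a) and (b) by generating a free submonoid from a maximal independent subset of $\beta(H)$ in one direction and reading off independent generators from a free submonoid of maximal dimension in the other.
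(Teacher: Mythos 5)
Your proof is correct, and although it rests on the same pillar as the paper's proof---the identity $ind\ H = free\ H$ from Theorem \ref{teoindice}---the implementation of both directions is genuinely different. For the forward direction, the paper imports the construction from the end of the proof of Theorem \ref{teoindice} (hence, implicitly, Proposition \ref{propfinitelyFsubseteqH} and the rank of the matrix $M^L$) to produce $B$, and derives condition (3) monoid-theoretically: $\langle B\cup\{h\}\rangle$ cannot be free, since otherwise $ind\ H\geq r+1$, and the resulting non-unique factorization is cancelled and split by sign of $n-m$. You instead take $B$ to be a maximal $\mathbb{Q}$-linearly independent subset of $\beta(H)$ and obtain (3) by clearing denominators in the dependence $h\in span_{\mathbb{Q}}\,B$; this is more direct, works uniformly when $H$ is not finitely generated without routing through Proposition \ref{propfinitelyFsubseteqH}, and along the way you make explicit two points the paper leaves tacit: that $\beta(\langle B\rangle)=B$ (which is exactly what justifies the paper's claim that freeness of $\langle B\cup\{h\}\rangle$ would force $ind\ H\geq r+1$) and the dictionary between freeness and $\mathbb{Q}$-linear independence (your facts (a) and (b)). For the converse, the paper builds a $k\times(r+ind\ H)$ matrix from $B$ together with an auxiliary independent set $C\subseteq\beta(H)$ of size $ind\ H$ and reaches a rank contradiction; you dispense with $C$ and the matrix entirely by observing that (3) places all of $H$ inside $span_{\mathbb{Q}}\,B$, so that $free\ H\leq r$ by (b) and then $ind\ H=free\ H\leq r$. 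The trade-off is that the paper's route recycles its matrix machinery and stays close to the formula $ind\ H=rank\ M^H$, while yours is linear-algebraically self-contained, treats the one delicate step (splitting coefficients by sign so that $f,g$ land in $\langle B\rangle$ rather than merely in its difference group) with full care, and yields the reusable identity $free\ H=\dim\,span_{\mathbb{Q}}\,\beta(H)$ as a byproduct.
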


\begin{proof}
If $r=ind\ H$, then, as in the last part of the proof of Theorem \ref{teoindice}, we can take $B=\{h_1,h_2,\ldots, h_r\}$ (we know that $r=|X|$ where $X$ is maximal independent over $H$). Then $|B|=r$ and $\langle B\rangle$ is free. Let $h\in \beta(H)\setminus B$. Since $\langle B\cup \{h\}\rangle$ cannot be free (otherwise $ind\ H\geq r+1$), there are $n, n_1,\cdots,n_r, m, m_1,\ldots, m_r,\in \mathbb N$ such that $\{n, n_1,\ldots, n_r\}\neq \{m, m_1,\ldots, m_r\}$ and 
\begin{equation*}
n_1h_1+\cdots+n_rh_r+nh=m_1h_1+\cdots+m_rh_r+mh.     
\end{equation*}
If $n=m$, then $n_1h_1+\cdots+n_rh_r=m_1h_1+\cdots+m_rh_r$ and $n_1=m_1,\ldots, n_r=m_r$ (since $\langle B\rangle$ is free), that is, $\{n, n_1,\ldots, n_r\}=\{m, m_1,\ldots, m_r\}$, a contradiction. Thus, $n\neq m$, say $m<n$. If $c=n-m$, $f=n_1h_1+\cdots+n_rh_r$ and $g=m_1h_1+\cdots+m_rh_r$, then $c>0$ and $f+ch=g$.

Conversely, if there is some $B\subseteq \beta(H)$ that satisfy the three conditions, then by $(1)$, $(2)$ and Theorem \ref{teoindice}, $r\leq ind\ H$. If $ind\ H>r$, then, using the last part of the proof of Theorem \ref{teoindice}, we can find a subset $C\subseteq \beta(H)$ such that $|C|=ind\ H$. We can form a $k\times (r+ind\ H)$ matrix $M$ whose first $r$ columns are the elements of $B$ and its last $ind\ H$ columns are those elements in $C$. Condition $(3)$ assures that the rank of the matrix $M$ is $r$. Also, there are $ind\ H$ linearly independent columns coming from $C$, so that the rank of $M$ is at least $ind\ H>r$. This is a contradiction. Therefore, $ind\ H=r$.
\end{proof}

For instance, if $H_k:=\{h\in \mathbb N^k: 1\leq h(1)\leq h(2)\leq \cdots \leq h(k)\}\cup \{0_k\}$ then $H_k$ is a $k$-monoid that is not finitely generated. We show that $ind\ H_k=k$ by finding a free submonoid of $H_k$ of dimension $k$. In fact, the $k$ elements $(1,1,1,\ldots,1), (1,2,2,\ldots, 2), (1,2,3\ldots, 3), \ldots, (1,2,3,\ldots, k)\in H_k$ are linearly independent in $\mathbb Q^k$, so they generate a free submonoid of $H_k$ of dimension $k$. Hence $k\leq H_k$, but really $ind\ H_k=k$ because $H_k$ is a $k$-monoid. In particular, $H_k\not\cong H_r$ if $k\neq r$.

\section{$k$-monoids of index 1}

In this section we give characterizations of $k$-monoids of index 1. We start with the following lemma.

\begin{lemma}
\label{lemacf=dg}
Let $f,g\in \mathbb N^k$. If $cf=dg$ for some $c,d\in \mathbb N^+$, then there exist $h\in \mathbb N^k$, $r,s\in \mathbb N^+$ such that $f=rh$ y $g=sh$. In particular, $f,g\in \langle h\rangle$.
\end{lemma}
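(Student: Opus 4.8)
The plan is to reduce to the case where the multipliers are coprime and then argue one coordinate at a time, extracting a common divisor in each coordinate to build $h$.

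First I would set $e=\gcd(c,d)$ and write $c=ec'$ and $d=ed'$ with $\gcd(c',d')=1$. From $cf=dg$ we get $ec'f=ed'g$, and cancelling the positive integer $e$ (coordinatewise this is just cancelling a nonzero natural number) gives the cleaner relation $c'f=d'g$ with $\gcd(c',d')=1$. This coprime reduction is the only mildly clever step; everything after it is forced.

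Next I would pass to coordinates. For each $i\in I_k$ the relation reads $c'\,f(i)=d'\,g(i)$. Since $d'$ divides $c'\,f(i)$ and $\gcd(c',d')=1$, Euclid's lemma gives $d'\mid f(i)$, so I may write $f(i)=d'\,a_i$ with $a_i\in\mathbb N$. Substituting into $c'\,f(i)=d'\,g(i)$ yields $c'd'\,a_i=d'\,g(i)$, and cancelling the positive integer $d'$ gives $g(i)=c'\,a_i$. Thus the single sequence $(a_i)_{i\in I_k}$ simultaneously controls both $f$ and $g$ in every coordinate.

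Finally I would define $h=(a_1,a_2,\ldots,a_k)\in\mathbb N^k$ and set $r=d'$ and $s=c'$, both of which lie in $\mathbb N^+$ since $c,d\in\mathbb N^+$. By construction $f=d'h=rh$ and $g=c'h=sh$, which is exactly the desired conclusion; the degenerate case $f=0_k$ is handled automatically, since then every $a_i=0$, forcing $g=0_k$ and $h=0_k$. The inclusion $f,g\in\langle h\rangle$ is then immediate, as $rh$ and $sh$ are nonnegative-integer multiples of $h$. I do not anticipate a genuine obstacle here: the proof is essentially a packaging of the elementary fact that coprimality plus $d'\mid c'f(i)$ forces $d'\mid f(i)$, applied uniformly across coordinates.
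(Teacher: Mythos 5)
Your proof is correct and follows essentially the same route as the paper: reduce to coprime multipliers by cancelling $\gcd(c,d)$, apply Euclid's lemma coordinatewise to extract $h$, and set $r=d'$, $s=c'$. The only (harmless) difference is stylistic---you derive $g(i)=c'a_i$ directly from $f(i)=d'a_i$, whereas the paper introduces separate factorizations $f(i)=df'(i)$, $g(i)=cg'(i)$ and then shows $f'(i)=g'(i)$.
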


\begin{proof}
We have that $cf(i)=dg(i)$ for $1\leq i\leq k$. By cancelling common factors we can assume that $c$ and $d$ are relatively prime. Then, for all $1\leq i\leq k$, $f(i)=df'(i)$ and $g(i)=cg'(i)$ for some $f'(i),g'(i)\in\mathbb N$. Then, $cdf'(i)=cf(i)=dg(i)=dcg'(i)$ for all $1\leq i\leq k$, from where $f'(i)=g'(i)$ for all $i$. If $h\in \mathbb N^k$ is given by $h(i)=f'(i)$ for all $i$, then $f=dh$ and $g=ch$.
\end{proof}

\begin{proposition}\label{propindice1Hleqf} 
A $k$-monoid $H$ has index 1 if and only if there exists $f\in \mathbb N^k$ such that $H\subseteq \langle f\rangle$. 
\end{proposition}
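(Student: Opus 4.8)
The plan is to prove the two implications separately. The reverse (sufficiency) direction is essentially immediate from Lemma \ref{lmindex}, while the forward direction uses the structural content of Theorem \ref{teoindice} together with Lemma \ref{lemacf=dg}.

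For the direction ($\Leftarrow$), suppose $H\subseteq \langle f\rangle$ for some $f\in\mathbb N^k$. Since $\langle f\rangle$ is itself a $k$-monoid, parts (4) and (5) of Lemma \ref{lmindex} give $ind\ H\leq ind\ \langle f\rangle=1$, and since $ind\ H\geq 1$ always, I conclude $ind\ H=1$. This settles sufficiency in one line.

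For the direction ($\Rightarrow$), assume $ind\ H=1$. If $H=\{0_k\}$ I simply take $f=0_k$, so I may assume $H$ is nontrivial. By Theorem \ref{teoindice}, $1=ind\ H=|X|$ for some $X\subseteq I_k$ maximal independent over $H$, so $\{\alpha_a^H\}$ is a basis of $vect\ H$ for a single index $a$. Consequently every $\alpha_i^H$ is a scalar multiple of $\alpha_a^H$: there are $c_i\in\mathbb Q$ with $h(i)=c_i\,h(a)$ for all $h\in H$ and all $i\in I_k$. Note any nonzero $h\in H$ has $h(a)\neq 0$, for otherwise all its coordinates vanish; in particular $\alpha_a^H\neq 0$. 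Fixing a nonzero $h_0\in H$, one reads off $c_i=h_0(i)/h_0(a)$, and substituting back gives $h=\bigl(h(a)/h_0(a)\bigr)h_0$ for every $h\in H$. Thus all nonzero elements of $H$ lie on the single rational ray spanned by $h_0$.

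To manufacture a generator I would set $d=\gcd\{h_0(1),\ldots,h_0(k)\}$ and $f=h_0/d$, a primitive vector. For each $h\in H$ this yields $h=\lambda f$ with $\lambda=h(a)d/h_0(a)\in\mathbb Q_{\geq 0}$, and the key point is that primitivity of $f$ forces $\lambda\in\mathbb N$: writing $\lambda=p/q$ in lowest terms, $q$ must divide every $f(i)$, hence $q\mid \gcd\{f(1),\ldots,f(k)\}=1$. Therefore $h\in\langle f\rangle$, giving $H\subseteq\langle f\rangle$ as desired. I expect this last step to be the main obstacle, since it is where rational proportionality of all elements must be upgraded to containment in a single cyclic monoid. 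A cleaner packaging of the same idea is to observe that any two nonzero $g,h\in H$ satisfy $g(a)h=h(a)g$ and to invoke Lemma \ref{lemacf=dg} to place each such pair in a common $\langle\cdot\rangle$; the primitive-vector argument is precisely what promotes this pairwise statement to one uniform generator valid for all of $H$ simultaneously.
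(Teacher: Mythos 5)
Your proof is correct, but your forward direction takes a genuinely different route from the paper's. The paper first deduces that $H$ is finitely generated (since $ind\ H=1$ gives an isomorphism with a nontrivial $1$-monoid, and every $1$-monoid is finitely generated), then invokes the rank statement $rank\ M^H=1$ of Theorem \ref{teoindice} and chains Lemma \ref{lemacf=dg} pairwise through the generators: it finds $f_1$ with $h_1,h_2\in\langle f_1\rangle$, then $f_2$ with $f_1,h_3\in\langle f_2\rangle$, and so on until a single $f=f_{r-1}$ absorbs all of $\beta(H)$. You instead use the $ind\ H=|X|$ half of Theorem \ref{teoindice}: with $X=\{a\}$ maximal independent, every coordinate functional is a rational multiple of $\alpha_a^H$, so all of $H$ lies on one rational ray through a fixed nonzero $h_0$, and your gcd/primitivity step (writing $\lambda=p/q$ in lowest terms, so $q\mid f(i)$ for all $i$ and hence $q=1$) correctly upgrades rational proportionality to $H\subseteq\langle f\rangle$ with $f=h_0/d$. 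Your route buys two things: it avoids finite generation entirely, so you never need the cited fact about $1$-monoids, and it treats all elements of $H$ uniformly rather than inducting over finitely many generators; moreover your $f$ is exactly the primitive part of $h_0$, so you are anticipating the machinery the paper develops only afterward in Propositions \ref{propindex1primitive} and \ref{propuniqueprimitive}. What the paper's version buys in exchange is reuse of Lemma \ref{lemacf=dg} (needed again for Proposition \ref{propuniqueprimitive}) and, for the forward direction, reliance only on the rank characterization rather than on the independent-set formulation. Your sufficiency direction coincides with the paper's one-line argument.
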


\begin{proof} 
If $H\leq \langle f\rangle$, then $1\leq ind\ H\leq ind\ \langle f\rangle=1$, so that $ind\ H=1$. Conversely, suppose that $ind\ H=1$. If $H=\{0_k\}$, then we can take $f=0_k$. 

Assume $H$ is nontrivial. Since $ind\ H=1$, there exists a nontrivial 1-monoid $F$ such that $H\cong F$, but every 1-monoid is finitely generated (see \cite{rosalesbook}), so $H$ is finitely generated. Let $\beta(H)=\{h_1,h_2,\ldots, h_r\}$ where $r=dim\ H$. The matrix $M^H$ associated to $H$ has rank 1 by Theorem \ref{teoindice}. Then, taking $h_1$ and $h_2$, there is a rational $c_1/d_1$ where $c_1,d_1\in \mathbb N^+$ such that $h_2=(c_1/d_1)h_1$, or the same, $c_1h_1=d_1h_2$. By Lemma \ref{lemacf=dg}, there is some $f_1\in \mathbb N^k$ such that $h_1,h_2\in \langle f_1\rangle$. Now, take $h_2$ and $h_3$. Again, there are $c_2,d_2\in \mathbb N^+$ such that $c_2h_2=d_2h_3$; also, there is some $e\in \mathbb N^+$ such that $h_2=ef_1$. Then, $(c_2e)f_1=d_2h_3$. By Lemma \ref{lemacf=dg}, there is some $f_2\in \mathbb N^k$ such that $f_1, h_3\in \langle f_2\rangle$ and therefore, $h_1, h_2, h_3\in \langle f_2\rangle$. We continue in this way until we find $f=f_{r-1}\in \mathbb N^k$ such that $h_1,h_2,\ldots, h_r\in \langle f\rangle$ and hence $H\subseteq \langle f\rangle$.
\end{proof}

The element $f$ such that $H\subseteq \langle f\rangle$ in Proposition \ref{propindice1Hleqf} is not unique in general. We describe a method to determine all such $f$. 

Let us call an element $h\in \mathbb N^k\setminus \{0_k\}$ \textit{primitive} if $\gcd h(I_k)=1$. 

If $h\in \mathbb N^k\setminus\{0_k\}$, then there are unique $c_h\in \mathbb N^+$ and $g_h\in \mathbb N^k$ such that $h=c_hg_h$ and $g_h$ is primitive. In fact, $c_h=\gcd h(I_k)$ and $g_h:I_k\to \mathbb N$ is given by $g_h(i)=h(i)/c_h$ for $i\in I_k$. Let us call $g_h$ the \textit{primitive part} of $h$. 

\begin{proposition}
\label{propindex1primitive} 
Let $H$ be a $k$-monoid generated by $h_1,h_2,\ldots, h_r$ (all nonzero). Then $ind\ H=1$ if and only if $h_1, h_2, \ldots, h_r$ have the same primitive part. 
\end{proposition}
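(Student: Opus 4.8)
The plan is to reduce both implications to Proposition~\ref{propindice1Hleqf}, which characterizes index-$1$ monoids as exactly those contained in a monogenic monoid $\langle f\rangle$, combined with the uniqueness of the primitive decomposition $h=c_h g_h$ established just before the statement.

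For the ``if'' direction I would suppose that $h_1,\ldots,h_r$ share a common primitive part $g$, so that $h_i=c_i g$ with each $c_i\in\mathbb{N}^+$. Then every generator lies in $\langle g\rangle$, whence $H=\langle h_1,\ldots,h_r\rangle\subseteq\langle g\rangle$, and Proposition~\ref{propindice1Hleqf} immediately gives $ind\ H=1$.

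For the converse I would assume $ind\ H=1$ and apply Proposition~\ref{propindice1Hleqf} to obtain $f\in\mathbb{N}^k$ with $H\subseteq\langle f\rangle$. Since the $h_i$ are nonzero, $H$ is nontrivial, which forces $f\neq 0_k$; hence each $h_i=n_i f$ with $n_i\in\mathbb{N}^+$. Writing $f=c_f g_f$ with $g_f$ primitive yields $h_i=(n_i c_f)g_f$, and because $g_f$ is primitive and $n_i c_f\in\mathbb{N}^+$, the uniqueness of the primitive decomposition forces the primitive part of each $h_i$ to equal $g_f$. Thus all the $h_i$ have the same primitive part.

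The argument is short, and I expect no serious obstacle: the only points requiring care are the observation that $f\neq 0_k$ (so that the factor $n_i$ is a genuine positive integer and $h_i$ really is a positive multiple of the primitive part of $f$) and the correct invocation of the uniqueness statement in the final step. Everything else is routine bookkeeping once Proposition~\ref{propindice1Hleqf} is in hand.
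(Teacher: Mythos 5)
Your proof is correct and follows essentially the same route as the paper: both directions reduce to Proposition~\ref{propindice1Hleqf}, with the converse using the decomposition $f=c_f g_f$ and uniqueness of the primitive part exactly as the paper does. Your explicit justification that $f\neq 0_k$ (where the paper only says ``of course'') is a minor, welcome addition.
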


\begin{proof}
If $ind\ H=1$, then by Proposition \ref{propindice1Hleqf}, $H\subseteq\langle f\rangle$ for some $f\in \mathbb N^k$. Of course $f\neq 0_k$. For each $1\leq j\leq r$, $h_j=d_jf$ for some $d_j\in \mathbb N^+$. Then $h_j=(d_jc_f)g_f$ and the primitive part of $h_j$ is $g_f$. Thus, all the $h_j$ have the same primitive part $g_f$.

Conversely, if all $h_j$ have the same primitive part, say it is $f\in \mathbb N^k$, then $H\subseteq \langle f\rangle$ and so $ind\ H=1$ by Propostition \ref{propindice1Hleqf}.
\end{proof}

\begin{proposition}
\label{propuniqueprimitive}
Let $H$ be a nontrivial $k$-monoid such that $ind\ H=1$. Then there is a unique primitive $f\in \mathbb N^k$ such that $H\subseteq \langle f\rangle$. If $g\in \mathbb N^k$ is such that $H\subseteq\langle g\rangle$, then $g=cf$ for some $c\in \mathbb N^+$.
\end{proposition}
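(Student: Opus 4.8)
The plan is to prove the statement in two parts: existence and uniqueness of a primitive $f$, and then the description of all $g$ with $H\subseteq\langle g\rangle$. For existence, by Proposition \ref{propindice1Hleqf} there is some $f_0\in\mathbb N^k$ with $H\subseteq\langle f_0\rangle$, and since $H$ is nontrivial we have $f_0\neq 0_k$. Taking the primitive part $f:=g_{f_0}$, I have $f_0=c_{f_0}f$, so $\langle f_0\rangle\subseteq\langle f\rangle$ and hence $H\subseteq\langle f\rangle$ with $f$ primitive. This settles existence.

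For uniqueness, suppose $f$ and $f'$ are both primitive with $H\subseteq\langle f\rangle$ and $H\subseteq\langle f'\rangle$. Pick any nonzero $h\in H$ (which exists since $H$ is nontrivial). Then $h=af$ and $h=bf'$ for some $a,b\in\mathbb N^+$, so $af=bf'$. By Lemma \ref{lemacf=dg} there exist $w\in\mathbb N^k$ and $r,s\in\mathbb N^+$ with $f=rw$ and $f'=sw$. Since $f$ is primitive, $\gcd f(I_k)=1$; but $f=rw$ forces $r\mid\gcd f(I_k)$, so $r=1$ and $f=w$. Likewise $f'$ primitive gives $s=1$ and $f'=w$, whence $f=f'$. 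This is the step I expect to carry the real content, and the main subtlety is simply invoking Lemma \ref{lemacf=dg} correctly and using primitivity to pin down the scalars $r,s$ to be $1$.

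For the final claim, let $g\in\mathbb N^k$ satisfy $H\subseteq\langle g\rangle$; again $g\neq 0_k$ since $H$ is nontrivial. Writing $g=c_g g_g$ with $g_g$ its primitive part, I get $\langle g\rangle\subseteq\langle g_g\rangle$, so $H\subseteq\langle g_g\rangle$ with $g_g$ primitive. By the uniqueness just established, $g_g=f$, and therefore $g=c_g f$ with $c:=c_g\in\mathbb N^+$, as required. Thus every $g$ with $H\subseteq\langle g\rangle$ is a positive integer multiple of the unique primitive $f$.

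I do not anticipate any serious obstacle here: the argument is a clean application of Proposition \ref{propindice1Hleqf} together with Lemma \ref{lemacf=dg} and the elementary fact (recorded just before the statement) that every nonzero element of $\mathbb N^k$ factors uniquely as a positive scalar times a primitive element. The only point requiring a moment's care is confirming that primitivity rules out any nontrivial common factor in the decomposition $f=rw$, which forces $r=1$.
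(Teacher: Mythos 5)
Your proof is correct and takes essentially the same approach as the paper: pick a nonzero $h\in H$, equate its two representations to get $c_1f=c_2f'$, apply Lemma \ref{lemacf=dg}, and use primitivity to force the scalars to be $1$. The only harmless variations are that you spell out the existence step (via Proposition \ref{propindice1Hleqf} and the primitive-part decomposition, which the paper leaves implicit) and that you deduce the final claim $g=cf$ from the already-proved uniqueness applied to $g_g$, whereas the paper re-applies Lemma \ref{lemacf=dg} directly.
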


\begin{proof}
Assume $H\subseteq\langle f_1\rangle$ and $H\subseteq\langle f_2\rangle$ where $f_1, f_2\in \mathbb N^k$ are primitive. Let $h\in H\setminus \{0_k\}$. Then there are $c_1, c_2\in \mathbb N^+$ such that $h=c_1f_1$ and $h=c_2f_2$. Thus $c_1f_1=c_2f_2$ and by Lemma \ref{lemacf=dg} there are $r,s\in \mathbb N^+$ and $g\in \mathbb N^k$ such that $f_1=rg$ and $f_2=sg$. Since $f_1$ and $f_2$ are primitive, $r=s=1$ and thus $f_1=g=f_2$. This proves uniqueness. 

Now, if $H\subseteq \langle f\rangle$ and $H\subseteq\langle g\rangle$ where $f, g\in \mathbb N^k$ and $f$ is primitive, then by taking $h\in H$, $h\neq 0_k$, we have that $h=c_1f$ and $h=c_2g$ for some $c_1, c_2\in \mathbb N^+$, so that $c_1f=c_2g$ and by Lemma \ref{lemacf=dg}, there are $c, d\in \mathbb N^+$ and $h_0\in \mathbb N^k$ such that $g=ch_0$ and $f=dh_0$. Since $f$ is primitive, $d=1$, so $f=h_0$ and $g=cf$.
\end{proof}

If $H$ is a $k$-monoid generated by $h_1, h_2, \ldots, h_r$ and all the $h_j$ have the same primitive part $f$, then $f$ is the unique such that $H\subseteq \langle f\rangle$ with $f$ primitive. For $1\leq j\leq r$ we have $h_j=d_jf$ for some $d_j\in \mathbb N^+$. The function $\{h_1,h_2,\ldots, h_r\}\to \mathbb N$ that sends $h_j$ to $d_j$ extends additively to a monomorphism $\varphi:H\to \mathbb N$ that gives an isomorphism between $H$ and $\langle d_1, d_2, \ldots, d_r\rangle$. We can take $d=\gcd (d_1, d_2, \ldots, d_r)$ and $H$ is isomorphic to $\langle d_1/d, d_2/d, \ldots, d_r/d\rangle$ (a numerical semigroup). Finally, if $H\subseteq \langle g\rangle$ for some $g\in \mathbb N^k$, then $g=cf$ where $c$ is some factor of $d$. This determines all $f$ in Proposition \ref{propindice1Hleqf}.

\section{Isomorphic numerical semigroups are equal}

Let $H$ be a numerical semigroup. If $H\neq \mathbb N$, then there exists $m_H\in \mathbb N^+$ such that $H$ contains every natural number $n\geq m_H$ and $m_H-1\notin H$. This number $m_H-1$ is known as the \textit{Frobenius number of $H$}. If $H=\mathbb N$ we define $m_H=1$.

If $H$ is a numerical semigroup, then \textit{the multiplicity} of $H$ is the number $h_0=min\ (H\setminus\{0\})$. The set $H\setminus\{h_0\}$ is a numerical semigroup, for $h_0$ cannot be written as the sum of two nonzero elements of $H$.

If $H$ and $K$ are numerical semigroups and $\varphi:H\to K$ is an additive map, then, for $n\in H$ and $m\in \mathbb N$, $\varphi(mn)=m\varphi(n)$, and if $n, m\in H$, then $\varphi(nm)=n\varphi(m)=m\varphi(n)$.

The main result of this section is that if $H$ and $K$ are numerical semigroups and they are isomorphic, then $H=K$ and there is only one \textit{automorphism} of $H$, the identity map.

\begin{lemma}
\label{lemaisoincreasing}
If $H$ and $K$ are isomomorphic numerical semigroups, then there is only one isomorphism $\varphi:H\to K$, that is an increasing function.
\end{lemma}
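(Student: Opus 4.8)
The plan is to show that \emph{any} isomorphism $\varphi:H\to K$ must be multiplication by a fixed positive constant, and then to read off both the monotonicity and the uniqueness from this description. The engine is the identity recorded just before the lemma: for an additive map and nonzero $n,m\in H$ one has $\varphi(nm)=n\varphi(m)=m\varphi(n)$, which follows from additivity together with the fact that $nm=\underbrace{m+\cdots+m}_{n}\in H$.

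First I would fix an arbitrary isomorphism $\varphi:H\to K$ and apply the identity to any two nonzero $n,m\in H$, obtaining $n\varphi(m)=m\varphi(n)$ and hence $\varphi(n)/n=\varphi(m)/m$. Therefore the rational number $c:=\varphi(n)/n$ does not depend on the nonzero $n\in H$, and $\varphi(n)=cn$ holds for every $n\in H$ (the value at $0$ being forced by $\varphi(0)=0$). Since $\varphi$ is injective and $\varphi(0)=0$, we have $\varphi(n)\neq0$ whenever $n\neq0$, so $c>0$. The increasing property is then immediate: if $n<m$ in $H$, then $\varphi(n)=cn<cm=\varphi(m)$.

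For uniqueness I would pin down $c$ from $H$ and $K$ alone. Using surjectivity, $K=\varphi(H)=\{\,cn:n\in H\,\}$, and since $c>0$ multiplication by $c$ preserves minima, so the multiplicity $k_0=\min(K\setminus\{0\})$ of $K$ equals $c\cdot\min(H\setminus\{0\})=ch_0$. Thus $c=k_0/h_0$ is determined by $H$ and $K$. Consequently, if $\varphi_1,\varphi_2:H\to K$ are two isomorphisms, both equal multiplication by this same $c$, whence $\varphi_1=\varphi_2$.

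I do not expect a serious obstacle here; the entire argument rests on the single observation that the additive identity $\varphi(nm)=n\varphi(m)=m\varphi(n)$ forces $\varphi$ to be scalar multiplication, after which positivity of $c$ delivers monotonicity and the computation $c=k_0/h_0$ delivers uniqueness. The only point needing care is to note that $c$ is merely a positive rational at this stage---positivity is all that monotonicity and the uniqueness comparison require---while the sharper conclusion $c=1$, equivalent to $H=K$, is deliberately postponed to the following theorem.
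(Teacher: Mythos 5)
Your proposal is correct, and it takes a genuinely different --- and in fact stronger --- route than the paper. Both arguments run on the same engine, the identity $n\varphi(m)=m\varphi(n)$ for $n,m\in H$, but the paper uses it only locally and inductively: it first shows by contradiction that $\varphi$ must send the multiplicity $h_0=\min(H\setminus\{0\})$ to the multiplicity $k_0=\min(K\setminus\{0\})$ (if $\varphi(h_0)=k_1>k_0$, pick $h_1>h_0$ with $\varphi(h_1)=k_0$ and derive $h_1k_1=h_0k_0$, which is impossible), then peels off the minimal nonzero element --- using that $H\setminus\{h_0\}$ is again a numerical semigroup --- and inducts, concluding that $\varphi$ matches the $n$-th smallest element of $H$ with the $n$-th smallest element of $K$, which gives uniqueness and monotonicity simultaneously. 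You instead apply the identity globally to all nonzero pairs, deducing that $\varphi(n)/n$ is a constant $c\in\mathbb{Q}^{+}$, so that every isomorphism (indeed every injective additive map) is multiplication by a fixed positive rational; monotonicity is then immediate and uniqueness follows from the computation $c=k_0/h_0$, which pins $c$ down from $H$ and $K$ alone. All the steps check out: $nm\in H$ because it is a multiple of $m\in H$, injectivity gives $c>0$, and surjectivity gives $K=cH$, so $k_0=ch_0$ as you say. What your scalar description buys is considerable: it would also shortcut Theorem \ref{teounicosimple}, since $K=cH$ with $c=p/q$ in lowest terms forces $q\mid n$ for every $n\in H$, hence $q=1$ because $\gcd H=1$; then $p$ divides every element of $K$, hence $p=1$ because $\gcd K=1$, giving $c=1$, $\varphi=\mathrm{id}$, and $H=K$ in one stroke --- whereas the paper's theorem requires a separate, longer argument via Lemma \ref{lemasubnumericalsemigroup}. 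What the paper's inductive approach buys in exchange is the explicit order-isomorphism picture ($h_n\mapsto k_n$) without ever leaving $\mathbb{N}$, avoiding the passage through rational scalars.
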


\begin{proof}
Let $\varphi:H\to K$ be an isomorphism. Then $\varphi(min\ H\setminus \{0\})=min\ K\setminus\{0\}$. In fact, if $h_0=min\ H\setminus\{0\}$, $k_0=min\ K\setminus\{0\}$, $k_1=\varphi(h_0)$ and we assume that $k_1>k_0$, then there is $h_1\in H$ with $h_1>h_0$ such that $\varphi(h_1)=k_0$. Then 
\begin{equation*}
    h_1\cdot k_1=h_1\varphi(h_0)=\varphi(h_1h_0)=h_0\varphi(h_1)=h_0k_0,
\end{equation*}
and since $h_1>h_0$ and $k_1>k_0$, also $h_1k_1>h_0k_0$, a contradiction. Therefore $k_1=k_0$, that is, $\varphi(h_0)=k_0$.

If we write $H=\{0<h_0<h_1<h_2<\cdots\}$ and $K=\{0<k_0<k_1<k_2<\cdots\}$, then $\varphi(h_0)=k_0$. Now, $\varphi|_{H\setminus\{h_0\}}:H\setminus\{h_0\}\to K\setminus\{k_0\}$ is an ismomorphism of numerical semigroups, thus, by a similar argument as above, $\varphi(h_1)=k_1$. And by induction on $n$ we obtain that $\varphi(h_n)=k_n$ for all $n\in\mathbb N$. This shows that $\varphi$ is unique and is an increasing function.
\end{proof}

\begin{lemma}
\label{lemasubnumericalsemigroup}
Suppose that $\varphi:H\to K$ is an isomorphism between numerical semigroups. If $L\subseteq H$ is a numerical semigroup, then $\varphi(L)\subseteq K$ is a numerical semigroup. 
\end{lemma}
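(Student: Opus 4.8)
The plan is to verify the two defining features of a numerical semigroup for $\varphi(L)$: that it is a nontrivial submonoid of $\mathbb N$, and that $\gcd \varphi(L)=1$ (equivalently, that $\mathbb N\setminus \varphi(L)$ is finite), using the characterization recalled in the introduction. The first feature is immediate: $\varphi(L)\subseteq K\subseteq \mathbb N$, and since $\varphi$ is additive we have $0=\varphi(0)\in\varphi(L)$ and $\varphi(l_1)+\varphi(l_2)=\varphi(l_1+l_2)\in\varphi(L)$ for $l_1,l_2\in L$; nontriviality follows because $L$ is nontrivial and $\varphi$ is injective. Thus the entire content of the statement lies in the gcd condition.

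For that, I would exploit the multiplicative identity recorded just before Lemma \ref{lemaisoincreasing}: for nonzero $m,n\in H$ one has $n\varphi(m)=\varphi(nm)=m\varphi(n)$, so the quotient $\varphi(n)/n$ does not depend on $n$. Hence there is a single $q\in\mathbb Q^+$ with $\varphi(n)=qn$ for all $n\in H$, and in particular $\varphi(L)=qL$. Writing $q=a/b$ in lowest terms, the inclusion $qH=\varphi(H)=K\subseteq\mathbb N$ gives $b\mid h$ for every $h\in H$, so $b\mid\gcd H=1$; therefore $q=a\in\mathbb N^+$.

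It remains to pin down $q$. Since $q$ is now an integer and $\gcd L=1$, we have $\gcd \varphi(L)=\gcd(qL)=q\,\gcd L=q$, so the claim $\gcd\varphi(L)=1$ is equivalent to $q=1$; and $q=1$ indeed holds because $1=\gcd K=\gcd(qH)=q\,\gcd H=q$. This finishes the proof, and as a byproduct shows $\varphi=\mathrm{id}_H$ and $K=H$, with $\varphi(L)=L$. The step that carries the weight is recognizing that the multiplicative identity forces $\varphi$ to be a homothety; everything afterward is short gcd bookkeeping. The main subtlety to be careful about is that one cannot conclude $\gcd\varphi(L)=1$ from $\varphi(L)\subseteq K$ and $\gcd K=1$ alone, since a submonoid of a numerical semigroup can have larger gcd; the argument must use the full image $\varphi(H)=K$ together with $\gcd H=1$, not merely the inclusion $\varphi(L)\subseteq\mathbb N$.
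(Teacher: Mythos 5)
Your proof is correct, but it takes a genuinely different and much stronger route than the paper's. The paper's proof of this lemma is purely set-theoretic and makes no use of gcd's or of the multiplicative identity: since $\mathbb N\setminus L$ is finite, $H\setminus L$ is finite; bijectivity of $\varphi$ gives $K\setminus\varphi(L)=\varphi(H\setminus L)$ finite; hence $\mathbb N\setminus\varphi(L)=(\mathbb N\setminus K)\cup(K\setminus\varphi(L))$ is finite, and $\varphi(L)$, being a submonoid of $\mathbb N$ with finite complement, is a numerical semigroup. Your argument instead extracts from $n\varphi(m)=\varphi(nm)=m\varphi(n)$ that $\varphi$ is multiplication by a fixed $q\in\mathbb Q^+$ (each step checks out: $nm\in H$ so the identity applies; $q>0$ by injectivity; the denominator of $q$ divides $\gcd H=1$; surjectivity gives $1=\gcd K=q\gcd H=q$), and it is worth noticing what you have thereby actually proved: $\varphi=\mathrm{id}_H$, $K=H$ and $\varphi(L)=L$, which is the section's main Theorem \ref{teounicosimple} together with the uniqueness part of Lemma \ref{lemaisoincreasing}, obtained in a few lines and without the paper's longer route through $m_H$, multiplicities, and this very lemma. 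So as a proof of the lemma alone your argument is overkill but valid; as a strategy for the section it is a substantial shortcut that collapses the paper's architecture. What each approach buys: the paper's proof is local and minimal, needing only that a bijection preserves cofiniteness, and so it would survive in settings where the rigidity argument is unavailable; yours front-loads the global rigidity of isomorphisms of numerical semigroups, trading a one-line lemma for an immediate proof of the main theorem. Your closing caveat is also well taken: $\gcd\varphi(L)=1$ cannot be deduced from $\varphi(L)\subseteq K$ and $\gcd K=1$ alone (e.g.\ $\langle 2\rangle\subseteq\mathbb N$), and your use of the full equality $\varphi(H)=K$ is exactly where surjectivity enters — just as it does, more quietly, in the paper's identity $\varphi(H\setminus L)=K\setminus\varphi(L)$.
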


\begin{proof}
Since $L$ is a numerical semigroup, $\mathbb N\setminus L$ is finite, and therefore $H\setminus L$ is finite. Then, $\varphi(H\setminus L)=K\setminus \varphi(L)$ is finite and $\mathbb N\setminus \varphi(L)=(\mathbb N\setminus K)\cup (K\setminus \varphi(L))$ is finite. Thus, $\varphi(L)$ is a numerical semigroup. 
\end{proof}

\begin{theorem}
\label{teounicosimple}
If $H$ and $K$ are isomorphic numerical semigroups, then $H=K$. 
\end{theorem}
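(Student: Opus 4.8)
The plan is to use the unique increasing isomorphism $\varphi\colon H\to K$ furnished by Lemma~\ref{lemaisoincreasing} and to show that it can only be the identity map, whence $H=\varphi(H)=K$. Writing $h_0=\min(H\setminus\{0\})$ and $k_0=\min(K\setminus\{0\})$ for the two multiplicities, the fact that $\varphi$ is an increasing bijection already forces $\varphi(h_0)=k_0$ (as noted in the proof of Lemma~\ref{lemaisoincreasing}, since an order isomorphism sends least positive element to least positive element).

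The key step is to pin $\varphi$ down completely by exploiting the multiplicative compatibility relation $\varphi(nm)=n\varphi(m)=m\varphi(n)$, valid for all $n,m\in H$ (recorded just before Lemma~\ref{lemaisoincreasing}). Applying it with $m=h_0$, for every $n\in H$ one obtains $h_0\,\varphi(n)=\varphi(h_0 n)=n\,\varphi(h_0)=n\,k_0$, so that $\varphi(n)=\lambda n$ with $\lambda=k_0/h_0\in\mathbb Q^+$. Thus $\varphi$ is nothing but multiplication by the fixed rational $\lambda$, and $K=\varphi(H)=\lambda H$. I expect this derivation of the scaling formula to be the crux of the argument: recognizing that the multiplicative relation forces $\varphi$ to be linear is the one nonroutine move, and everything afterward is bookkeeping.

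It remains to show $\lambda=1$. Because $H$ is a numerical semigroup, every sufficiently large integer lies in $H$, and for each such $n$ the value $\lambda n=\varphi(n)$ is a natural number; taking two consecutive large integers $n,n+1\in H$ gives $\lambda=\lambda(n+1)-\lambda n\in\mathbb Z$, so $\lambda$ is a positive integer. If $\lambda\geq 2$, then $K=\lambda H\subseteq\lambda\mathbb N$ consists only of multiples of $\lambda$ and hence omits every sufficiently large integer not divisible by $\lambda$, contradicting the finiteness of $\mathbb N\setminus K$. Hence $\lambda=1$, the map $\varphi$ is the identity, and $H=K$. Note that this route bypasses Lemma~\ref{lemasubnumericalsemigroup}; an alternative inductive proof peeling off multiplicities one at a time (repeatedly applying the multiplicity-to-multiplicity correspondence to the truncated semigroups $H\setminus\{h_0\}$ and $K\setminus\{k_0\}$) would instead rely on that lemma to keep the truncations numerical.
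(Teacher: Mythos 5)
Your proof is correct, and it takes a genuinely different and more direct route than the paper. The paper first invokes Lemma~\ref{lemaisoincreasing}, then laboriously shows $\varphi(m_H)=m_K$ by analyzing how $\varphi^{-1}$ acts on the tail $\{m_K,m_K+1,\ldots\}$ (this is where Lemma~\ref{lemasubnumericalsemigroup} is used, to force the step size $c$ of the arithmetic progression $\{h_0+cr\}$ to be $1$), then deduces $m_H=m_K$ by comparing dimensions of the tails, shows $\varphi$ is the identity on the tail, and finally propagates this to all of $H$ via $m\varphi(h)=\varphi(mh)=mh$. You instead apply the relation $m\varphi(n)=n\varphi(m)$ \emph{globally} with $m=h_0$ fixed, which immediately linearizes the map: $\varphi(n)=\lambda n$ for all $n\in H$ with $\lambda=\varphi(h_0)/h_0$, after which integrality of $\lambda$ (from consecutive large elements of $H$) and cofiniteness of $K$ (equivalently $\gcd K=1$) force $\lambda=1$. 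This one observation subsumes most of the paper's machinery: you do not actually need Lemma~\ref{lemaisoincreasing} either, since any isomorphism is linearized by your computation, and uniqueness of the isomorphism (indeed, that the identity is the only automorphism) falls out as a byproduct rather than being an input. The paper's approach, while longer, works more locally with the structure of the conductor and multiplicity; your scaling argument is the standard slick proof and is shorter, self-contained, and arguably sharper. The only cosmetic slip is that you cite $\varphi(h_0)=k_0$ from Lemma~\ref{lemaisoincreasing}, but your derivation never needs it — $\lambda=\varphi(h_0)/h_0$ suffices as defined.
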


\begin{proof}
Let $H$ and $K$ be numerical semigroups and let $\varphi:H\to K$ be the unique isomorphism, that by Lemma \ref{lemaisoincreasing} is increasing. 

First we prove that $\varphi(m_H)=m_K$. Let $h_0\in H$ such that $\varphi(h_0)=m_K$. If $k\in K$ is such that $k\geq m_K$, then $k=m_K+r$ for some $r\geq 0$ and there exists $h\in H$ such that $\varphi(h)=k$. Since $\varphi$ is increasing, we have that $h=h_0+s(r)$ for some $s(r)\geq 0$. Then we have 
\begin{equation*}
    h_0\cdot k=h_0\varphi(h)=\varphi(h_0h)=h\varphi(h_0)=(h_0+s(r))\cdot m_K.
\end{equation*}
By replacing $k=m_K+r$ we get $h_0(m_K+r)=(h_0+s(r))m_K$, that yields to $rh_0=s(r)m_K$ and therefore 
\begin{equation*}
  s(r)=\frac{h_0}{m_K}r  
\end{equation*}
for all $r\geq 0$. Let $c=h_0/m_K$. Since $cr=s(r)$ is a natural number for all $r\geq 0$, $c$ is a natural number. Hence $\varphi(h_0+cr)=m_K+r$ for all $r\geq 0$, and so $\varphi^{-1}(m_K+r)=h_0+cr$, which means that 
\begin{equation*}
 \varphi^{-1}(\{m_K,m_K+1,m_K+2,\ldots\})=\{h_0+cr:\ r\geq 0\}.   
\end{equation*}

Now we apply Lemma \ref{lemasubnumericalsemigroup} to the isomorphism $\varphi^{-1}: K\to H$ and the numerical semigroup $L=\{0,m_K,m_K+1,m_K+2,\ldots\}\subseteq K$ to obtain that $\varphi^{-1}(L)=\{0\}\cup \{h_0+cr:\ r\geq 0\}$ is a numerical semigroup. Then, for some $r$ big enough, $h_0+rc$ and $h_0+c(r+1)$ must be consecutive natural numbers, that is, $h_0+c(r+1)=h_0+cr+1$, that yields to $c=1$.

Therefore $\varphi^{-1}(\{m_K,m_K+1,m_K+2,\ldots\})=\{h_0+r:\ r\geq 0\}$. In particular, $\{h_0+r:\ r\geq 0\}\subseteq H$ and so $m_H\leq h_0$. Then, $\varphi(m_H)\leq \varphi(h_0)=m_K$.

By a symmetric argument applied to the isomorphism $\varphi^{-1}:K\to H$ we obtain that $\varphi^{-1}(m_K)\leq m_H$, so that $m_K\leq \varphi(m_H)$. Thus, we have prover that $\varphi(m_H)=m_K$.

Now we show that $m_H=m_K$. In fact, the restriction of $\varphi$ to $M=\{0,m_H,m_H+1,m_H+2,\ldots\}$ gives an isomorphism between $M$ and $L=\{0,m_K,m_K+1,m_K+2,\ldots\}$, so these two numerical semigroups have the same dimension, but $M$ has dimension $m_H$ and $L$ has dimension $m_K$, so that $m_H=m_K$.

By Lemma \ref{lemaisoincreasing} there is only one isomorphism from $M$ to $L=M$, that is indeed the identity map. The restriction $\varphi|_M:M\to M$ is thus the identity of $M$, so for all $h\geq m_H$, $\varphi(h)=h$.

Now we can show that $H=K$. If $h\in H\setminus\{0\}$, then for some $m\in\mathbb N^+$, $mh>m_H$. Then, $m\varphi(h)=\varphi(mh)=mh$, and since $m>0$ it results that $h=\varphi(h)\in K$. This shows that $H\subseteq K$. The analog argument applied to $\varphi^{-1}:K\to H$ shows the inclusion $K\subseteq H$. Thus, $H=K$. 
\end{proof}

Note that by Lemma \ref{lemaisoincreasing}, if $H$ is a numerical semigroup, then the identity map $I:H\to H$ is the only isomorphism that exists. 

\begin{corollary}
Every nontrivial $k$ monoid of index 1 is isomorphic to a unique numerical semigroup. If $H=\langle h_1,h_2,\ldots, h_r\rangle$ is a numerical semigroup, then a $k$-monoid $F$ is isomorphic to $H$ if and only if there exists $f\in \mathbb N^k\setminus\{0_k\}$ such that $F=\langle h_1f, h_2f,\ldots, h_rf\rangle$. 
\end{corollary}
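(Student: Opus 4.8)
The plan is to treat the two assertions of the corollary separately, since the first is essentially a repackaging of earlier results while the second carries the real content.

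For the first assertion I would argue as follows. Let $F$ be a nontrivial $k$-monoid with $ind\ F=1$. By definition of the index there is a nontrivial $1$-monoid $K$ with $F\cong K$, and every nontrivial $1$-monoid is isomorphic to a numerical semigroup (as recalled in the introduction); alternatively, the construction in the paragraph preceding the corollary produces such a numerical semigroup explicitly from the primitive parts of a generating set of $F$. This gives existence. For uniqueness, if $F\cong S_1$ and $F\cong S_2$ with $S_1,S_2$ numerical semigroups, then $S_1\cong S_2$, and so $S_1=S_2$ by Theorem \ref{teounicosimple}.

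For the second assertion, the backward implication is a direct computation. Given $f\in\mathbb N^k\setminus\{0_k\}$, the map $n\mapsto nf$ is additive $\mathbb N\to\mathbb N^k$ and injective because $f\neq 0_k$; restricting it to $H=\langle h_1,\ldots,h_r\rangle$ yields an additive injection whose image is exactly $\langle h_1 f,\ldots,h_r f\rangle=F$, hence an isomorphism $H\cong F$. The forward implication is the heart of the matter. Assume $F\cong H$. Then $F$ is nontrivial with $ind\ F=ind\ H=1$ by Lemma \ref{lmindex}, so by Proposition \ref{propuniqueprimitive} there is a unique primitive $g\in\mathbb N^k$ with $F\subseteq\langle g\rangle$. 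I would use the coefficient map $\langle g\rangle\to\mathbb N$, $ng\mapsto n$ (an isomorphism since $g\neq 0_k$), to identify $F$ with a $1$-monoid $S'\subseteq\mathbb N$, say $F=\{ng:n\in S'\}$. Writing $d=\gcd S'$ one has $S'=d\,T$ with $T$ a numerical semigroup and $F\cong S'\cong T$; combined with $F\cong H$ this gives $H\cong T$, whence $H=T$ by Theorem \ref{teounicosimple}, so $S'=dH$. Setting $f=dg\in\mathbb N^k\setminus\{0_k\}$ then yields
\begin{equation*}
F=\{ng:n\in S'\}=\{ng:n\in dH\}=\{mf:m\in H\}=\langle h_1 f,\ldots,h_r f\rangle,
\end{equation*}
as desired.

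The step I expect to be the main obstacle is precisely this passage from the abstract isomorphism $F\cong H$ to the explicit scalar form $F=\{mf:m\in H\}$: a priori an isomorphism need not be induced by multiplication by a single vector, and what forces this rigidity is the identification $H=T$ supplied by Theorem \ref{teounicosimple}, which removes all freedom in the coefficient $1$-monoid $S'$ and pins down $S'=dH$. Everything else is bookkeeping with the primitive part $g$ and the greatest common divisor $d$.
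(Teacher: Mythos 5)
Your proposal is correct and follows essentially the same route as the paper: both parts rest on Theorem \ref{teounicosimple}, and the forward direction of the second assertion proceeds, exactly as in the paper, via the unique primitive $g$ of Proposition \ref{propuniqueprimitive}, dividing out the gcd $d$ to reach a numerical semigroup that Theorem \ref{teounicosimple} forces to equal $H$, and setting $f=dg$. The only (harmless) difference is bookkeeping: the paper phrases the argument through the minimal generating set $\beta(F)=\{d_1g,\ldots,d_rg\}$ and concludes $h_j=d_j/d$, whereas you work with the full coefficient monoid $S'=\{n\in\mathbb N: ng\in F\}$ and conclude $S'=dH$ all at once.
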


\begin{proof}
The first part of the corollary follows directly from Theorem \ref{teounicosimple}. Let $H=\langle h_1,h_2,\ldots, h_r\rangle$ be a numerical semigroup. Let us assume without loss of generality that $\beta(H)=\{h_1,h_2,\ldots, h_r\}$ and $0<h_1<h_2<\cdots<h_r$. If $F$ is a $k$-monoid isomorphic to $H$, then there is a unique primitive $g\in \mathbb N^k$ and $d_1, d_2,\ldots, d_r\in \mathbb N^+$ such that $d_1<d_2<\cdots<d_r$ and $\beta(F)=\{d_1g,d_2g,\ldots, d_rg \}$. If $d=\gcd \{d_1, d_2, \ldots, d_r\}$, then $F$ is isomomorphic to the numerical semigroup $\langle d_1/d, d_2/d,\ldots, d_r/d\rangle$. By Theorem \ref{teounicosimple}, $h_j=d_j/d$, $j=1,2,\ldots, r$. Thus, $F=\langle(dh_1)g, (dh_2)g, \ldots, (dh_r)g\rangle$ and we can take $f=dg$. The converse is obvious. 
\end{proof}

\Addresses

\end{document}